\documentclass[12pt, A4]{article}
\usepackage{amsmath,amssymb,amsfonts,mathrsfs,hyperref,microtype, amsthm}
\usepackage{bm, enumerate}
\usepackage{geometry}
 \geometry{
 a4paper,
 total={210mm,297mm},
 left=20mm,
 right=20mm,
 top=20mm,
 bottom=20mm,
 }


\usepackage{eufrak}
\usepackage{graphicx,color}
\usepackage[T1]{fontenc}
\usepackage{rotating}
\usepackage{booktabs}
\usepackage{mathtools}
\usepackage{float}
\usepackage{breqn}
\usepackage{graphicx}
\usepackage{caption}

\newtheorem{thm}{Theorem}[section]
\newtheorem{rem}{Remark}[section]
\newtheorem{definition}{Definition}[section]
\newtheorem{lem}{Lemma}[section]
\newtheorem{prop}{Proposition}[section]
\newtheorem{cor}{Corollary}[section]
\newtheorem{assu}{Assumption}
\newtheorem{ex}{Example}[section]

\numberwithin{equation}{section}

\def\HH{ \EuFrak H}
\def\N{{\rm I\kern-0.16em N}}
\def\R{{\rm I\kern-0.16em R}}
\def \E{{\rm I\kern-0.16em E}}
\def\P{{\rm I\kern-0.16em P}}
\def\F{{\rm I\kern-0.16em F}}
\def\B{{\rm I\kern-0.16em B}}
\def\C{{\rm I\kern-0.46em C}}
\def\G{{\rm I\kern-0.50em G}}

\newcommand{\ud}{\mathrm{d}}

\numberwithin{equation}{section}

\font\eka=cmex10
\usepackage{ae}

\def\ind{\mathrel{\hbox{\rlap{%
\hbox to 7.5pt{\hrulefill}}\raise6.6pt\hbox{\eka\char'167}}}}
\parindent0pt


\begin{document}

\title{\Large{\bf Stein's method on the second Wiener chaos :
    2-Wasserstein distance }}

\date{}
\renewcommand{\thefootnote}{\fnsymbol{footnote}}

\author{Benjamin Arras, Ehsan Azmoodeh, Guillaume Poly and Yvik Swan}

\maketitle

\abstract 

In the first part of the paper we use a new Fourier technique to
obtain a Stein characterizations for random variables in the second
Wiener chaos. We provide the connection between this result and
similar conclusions that can be derived using Malliavin calculus.  We
also introduce a new form of discrepancy which we use, in the second
part of the paper, to provide bounds on the 2-Wasserstein distance
between linear combinations of independent centered random
variables. Our method of proof is entirely original. In particular it
does not rely on estimation of bounds on solutions of the so-called
Stein equations at the heart of Stein's method. We provide several
applications, and discuss comparison with recent similar results on
the same topic.

 \vskip0.3cm
\noindent {\bf Keywords}: Stein's method, Stein discrepancy, Second
Wiener chaos, Variance Gamma distribution, 2-Wasserstein distance, Malliavin Calculus

\noindent{\bf MSC 2010}: 60F05, 60G50, 60G15, 60H07
\tableofcontents

\section{Introduction} 

\subsection{Background}

Stein's method is a popular and versatile probabilistic toolkit for
stochastic approximation.  Presented originally in the context of
Gaussian CLTs with dependant summands (see \cite{S72}) it has now been
extended to cater for a wide variety of quantitative asymptotic
results, see \cite{Chen-book} for a thorough overview of Gaussian
approximation
or~https://sites.google.com/site/steinsmethod for an
up-to-date list of references on non-Gaussian and non-Poisson
Stein-type results.  To this date one of the most active areas of
application of the method is in Gaussian analysis, via Nourdin and
Peccati's so-called Malliavin/Stein calculus on Wiener space, see
\cite{n-pe-1} or Ivan Nourdin's dedicated webpage
https://sites.google.com/site/malliavinstein.

Given two random objects $F, F_{\infty}$, Stein's method allows to compute fine
bounds on quantities of the form
\begin{equation*} \sup_{h\in \mathcal{H}} \left|
    \E\left[ h(F) \right] - \E \left[ h(F_{\infty}) \right] \right|.
\end{equation*}
 The method rests on three pins :  
\begin{enumerate}[A.]
\item a ``Stein pair'', i.e.\ a linear operator and a class of
  functions $(\mathcal{A}_\infty, \mathcal{F}(\mathcal{A}_\infty))$ such that
  $ \E \left[ \mathcal{A}_\infty(f(F_{\infty})) \right] = 0$ for all
  $f \in \mathcal{F}(\mathcal{A}_\infty)$;
\item a contractive inverse operator $\mathcal{A}_\infty^{-1}$ acting on the centered
  functions $\bar{h} = h - \E h(F_{\infty})$ in $\mathcal{H}$ and contraction information,
  i.e.\ tight bounds on $\mathcal{A}_\infty^{-1}(\bar{h})$ and its
  derivatives;
\item handles on the structure of $F$ (such as $F=F_n=T(X_1, \ldots,
  X_n)$ a $U$-statistic, $F = F(X)$ a functional of an isonormal
  Gaussian process, $F$ a statistic on a random graph, etc.). 
\end{enumerate}
Given the conjunction of these three elements one can then apply some
form of transfer principle : 
\begin{equation}\label{eq:6}
 \sup_{h\in \mathcal{H}} \left|
    \E\left[ h(F) \right] - \E \left[ h(F_{\infty}) \right] \right| 
   = \sup_{h\in \mathcal{H}} \left|
    \E\left[ \mathcal{A}_\infty\left( \mathcal{A}_\infty^{-1}(\bar h(F)) \right) \right]\right|;
\end{equation}
remarkably the right-hand-side of the above is often much more
amenable to computations than the left-hand-side, even in particularly
unfavourable circumstances. This has resulted in Stein's method
delivering several striking successes (see
\cite{B-H-J,Chen-book,n-pe-1}) which have led the method to becoming
the recognised and acclaimed tool it is today.

In general the identification of a Stein operator is the cornerstone
of the method. While historically most practical implementations
relied on adhoc arguments, several general tools exist, including
Stein's \emph{density approach} \cite{S86} and Barbour's
\emph{generator approach} \cite{B90}.  A general theory for Stein
operators is available in \cite{LRS}. In many important cases, these  are first
order differential operators  (see \cite{dobler}) or difference
operators (see \cite{LS}). Higher order differential operators have
recently come into light (see  \cite{g-variance-gamma,p-r-r}). 

Once an operator is $\mathcal{A}_{\infty}$ identified, the task is
then to bound the resulting rhs of \eqref{eq:6}; there are many ways
to perform this.  In this paper we focus on Nourdin and Peccati's
approach to the method. Let $F_{\infty}$ be a standard Gaussian random
variable. Then the appropriate operator is
$ \mathcal{A}_{\infty}f(x) = f'(x) - xf(x).$ Given a sufficiently
regular centered random variable $F$ with finite variance and smooth
density, define its Stein kernel $\tau_F(F)$ through the integration
by parts  formula
\begin{equation}\label{eq:9}
  \E[\tau_F(F) \phi'(F) ] = \E \left[ F \phi(F)
\right] \mbox{ for all absolutely continuous } \phi.
\end{equation}
 Then, for $f$ a solution to
$f'(x) - xf(x) = h(x) -\E[h(F_{\infty})]$ write
\begin{align*}
  \E[h(F)] - \E[h(F_{\infty})] &  = \E \left[ f_h'(F)
                                        - F f_h(F) \right]  = \E \left[ (1-\tau_F(F)) f_h'(F) \right]
\end{align*}
so that 
\begin{align*}
  \left|  \E[h(F)] - \E[h(F_{\infty})]  \right| \le \| f_h'\|
  \sqrt{\E \left[ (1-\tau_F(F))^2  \right]}. 
\end{align*} 
At this stage two good things happen : (i) the constant
$\sup_{h \in \mathcal{H}}\|f_h'\|$ (which is intrinsically Gaussian
and does not depend on the law of $F$) is bounded for wide and
relevant classes $\mathcal{H}$; (ii) the quantity
\begin{equation}\label{eq:7}
  S(F \, || \, F_{\infty}) = \E \left[ (1-\tau_F(F))^2 \right]
\end{equation}
(called the \emph{Stein discrepancy}) is tractable, via Malliavin
calculus, as soon as $F$ is a sufficiently regular functional of a
Gaussian process. These two realizations opened a wide field of
applications within the so-called ``Malliavin Stein Fourth moment
theorems'', see \cite{n-pe-ptrf, n-pe-1}.  A similar approach holds
also if $F_{\infty}$ is centered Gamma, see \cite{n-pe-2,a-m-m-p}, and more
generally if the law of the target random variable $F_{\infty}$ belongs to the family of
Variance Gamma distributions, see \cite{thale} for the method and
\cite{g-variance-gamma} for the bounds on the corresponding
solutions. See also \cite{kusuoka,viquez} for other
generalizations. We stress that in the Gaussian case, Stein's method
provides bounds e.g.\ in the Total Variation distance, whereas
technicalities related to the Gamma and Variance Gamma targets impose
that one must deal with smoother distances (i.e.\ integrated
probability measures of the form \eqref{eq:6} with $\mathcal{H}$ a
class of smooth functions) in such cases.

\subsection{Purpose of this paper}

The primary purpose of this paper is to extend Nourdin and Peccati's
``Stein discrepancy analysis'' to provide meaningful bounds on
$ \mathrm{d}(F, F_{\infty})$ for $\mathrm{d}(.,.)$ some appropriate
probability metric and random variables $F_{\infty}$ belonging to the
second Wiener chaos, that is
\begin{equation}\label{target-wiener1}
  F_{\infty} = \sum_{i=1}^q \alpha_{\infty,i} (N^2_i -1).
\end{equation}
where $q\ge 2$, $\{N_i\}_{i=1}^{q}$ are i.i.d. $\mathscr{N}(0,1)$
random variables, and the coefficients
$\{ \alpha_{\infty,i}\}_{i=1}^{q}$ are distinct.

 Such a generalization
immediately runs into a series of obstacles which need to be dealt
with. We single out three crucial questions : (Q1) what operator
$\mathcal{A}_{\infty}$? (Q2) what quantity will play the role of the
Stein discrepancy $S(F \, || \, F_{\infty})$?  (Q3) what kind of
distances $\mathrm{d}(.,.)$ can we tackle through this approach?

In this paper we provide a complete answers to a more general version
of (Q1), hereby opening the way for applications of Stein's method to
a wide variety of new target distributions. We use results from
\cite{a-p-p} to answer (Q2) for chaotic random variables. We also
provide an answer to (Q3) for $\mathrm{d}(.,.)$ the  $p$-Wasserstein
distances with $p\le 2$, under specific assumptions on the structure
of $F$. Such a result extends the scope of Stein's method to so far
unchartered territories, because aside for the case $p=1$,
$p$-Wasserstein distances do not admit a representation of the form
\eqref{eq:6}. 
\subsection{Overview of the results}

In the first part of the paper,
Section~\ref{sec:steins-method-second}, we discuss Stein's method for
target distributions of the form \eqref{target-wiener1}. In
Section~\ref{benjarras} we introduce an entirely new Fourier-based
approach to prove a Stein-type characterization for a large family of
$F_{\infty}$ encompassing those of the form \eqref{target-wiener1}.
The operator $\mathcal{A}_{\infty}$ we obtain is a differential
operator of order $q$.  In Section \ref{sec:mall-based-appr} we use
recent results from \cite{a-p-p} to derive a Malliavin-based
justification for our $\mathcal{A}_{\infty}$ when $F_{\infty}$ is of
the form \eqref{target-wiener1}. We also introduce a new quantity
$\Delta(F_n, F_{\infty})$ for which we will provide a heuristic
justification in Section~\ref{sec:cattyw-steins-meth} of the fact that
$\Delta(F_n, F_{\infty})$ generalizes the Stein discrepancy
$S(F \, || \, F_{\infty})$ in a natural way for chaotic random
variables.  Finally we argue
that 
quantitative assessments for general targets of the form
\eqref{target-wiener1} are out of the scope of the current version of
Stein's method. 

In the second part of the paper, Section \ref{sec:preliminaries}, we
introduce an entirely new \emph{polynomial} approach to Stein's method
 to provide bounds on the Wasserstein-2 distance (and hence the
 Wasserstein-1 distance) in terms of $\Delta(F_n, F_{\infty})$. Our
 approach
bypasses entirely the need for estimating bounds on solutions of Stein
equations.  More specifically we provide a tool for providing
quantitative assessments on $\mathrm{d}_{W_2}(F_n, F_{\infty})$ in terms of the
generalized Stein discrepancy $\Delta(F_n, F_{\infty})$ for
$F_{\infty}$ as in \eqref{target-wiener1} and
\begin{equation*}
F_n = \sum\limits_{i=1}^{\infty} \alpha_{n, i} (N_i^2-1)
\end{equation*}
still with $\left\{ N_i \right\}_{i\ge 1}$ i.i.d.\ standard Gaussian
and now $\left\{ \alpha_{n, i} \right\}_{i\ge1}$ not necessarily distinct
real numbers.   
As mentioned above, the fact that we bound the Wasserstein-2 distance
is not anecdotal : this distance (which is useful in
many important settings, see \cite{villani-book}) does not bear a dual representation
of the form \eqref{eq:6} and is thus entirely out of the scope of the
traditional versions of Stein's method. 
In Section \ref{sec:idea-behind-proof} we provide an intuitive
explanation of the proof of our main results. In Section
\ref{s:applications} we apply our bounds to particular cases and
compare them to the only competitor bounds available in the literature
which are due to \cite{thale}, wherein only the case
\eqref{target-wiener1} with $q=2$ and $\alpha_1 = -\alpha_2$ is covered. Finally
in Section~\ref{lem:appendix} we provide the proof.

\section{Stein's method for the second Wiener
  chaos}\label{sec:steins-method-second}
Here we set up Stein's method for target distributions in the second
Wiener chaos of the form
$$ F_\infty=  \alpha_{\infty,1} (N^2_1 -1) + \cdots +
\alpha_{\infty,q}(N^2_q -1)$$ where $\{N_i\}_{i=1}^{q}$ is a family
of i.i.d.\ $\mathscr{N}(0,1)$ random variables.

\subsection{Overview of known results}\label{sec:overv-known-results}

In the special case when $\alpha_{\infty,i}=1$ for all $i$, then
$F_\infty= \sum_{i=1}^{q} (N^2_i -1) \sim \chi^2_{(q)}$ is a centered
chi-squared random variable with $q$ degree of freedom.  Pickett
\cite{pickett} has shown that a Stein's equation for target
distribution $F_\infty$ is given by the first order differential
equation
$$x f'(x) + \frac12 (q-x) f(x) = h(x) - \E[h(F_\infty)].$$
For more recent results in this direction consult \cite{g-p-r} and
references therein.

Another important contribution in our direction is given by Gaunt in
\cite{g-2normal} with $q=2$ and
$\alpha_{\infty,1}= - \alpha_{\infty,2}= \frac12$. In this case
$$F_\infty \stackrel{\text{law}}{=} N_1 \times N_2 $$ where $N_1$ and $N_2$ are two independent $\mathscr{N} (0,1)$ random variables. He has shown that a Stein's equation for $F_\infty$ can be given by the following second order differential equation

$$x f''(x) + f'(x) - x f(x) = h(x) - \E [h (F_\infty)].$$
It is a well known fact that the density function of the target random
variable $N_1 \times N_2$ is expressible in terms of the modified
Bessel function of the second kind so that it is given by solution of
a known second order differential equation and the Stein operator
follows from some form of duality argument.

The more relevant studies of target distributions having a second
order Stein's differential equations include: Variance-Gamma
distribution \cite{g-variance-gamma}, Laplace distribution
\cite{p-r-laplace}, or a family of probability distributions given by
densities
$$f_s(x)= \Gamma(s) \sqrt{\frac{2}{s \pi}} \exp(- \frac{x^2}{2s})
U(s-1,\frac12,\frac{x^2}{2s}), \quad x>0, \, s \ge \frac12$$ appearing
in preferential attachment random graphs, and $U(a,b;x)$ is the
Kummer's confluent hypergeometric function, see \cite{p-r-r}.
We stress the fact that Stein's method is {completely open} even in
the simple case when the target random variable $F_\infty$ has only
two non-zero eigenvalues $\alpha_{\infty,1}$ and $\alpha_{\infty,2}$,
i.e.
$$F_\infty = \alpha_{\infty,1} (N^2_1 -1) + \alpha_{\infty,2} (N^2_2 -1)$$
such that
$\vert \alpha_{\infty,1}\vert \neq \vert \alpha_{\infty,2} \vert $. It
is worth mentioning that such distributions are beyond the
Variance-Gamma class, and are appearing more and more
in  very recent and delicate limit theorems, see \cite{b-t} for
asymptotic behavior of generalized Rosenblatt process at extreme
critical exponents and \cite{m-p-r-w} for asymptotic nodal
length distributions.

\subsection{Fourier-based  approach}

Before stating the next theorem, we need to introduce some
notations. For any $d$-tuple $(\lambda_1,...,\lambda_d)$ of real
numbers, we define the symmetric elementary polynomial of order
$k\in\{1,...,d\}$ evaluated at $(\lambda_1,...,\lambda_d)$ by:
\begin{align*}
e_{k}(\lambda_1,...,\lambda_d)=\sum_{1\leq i_1<i_2<...<i_k\leq d}\lambda_{i_1}...\lambda_{i_k}.
\end{align*}
We set, by convention, $e_{0}(\lambda_1,...,\lambda_d)=1$. Moreover, for any $(\mu_1,...,\mu_d)\in\mathbb{R}^*$
and any $k\in\{1,...,d\}$, we denote by $(\lambda/\mu)_k$ the $d-1$ tuple defined by:
\begin{align*}
(\frac{\lambda}{\mu})_k=\bigg(\frac{\lambda_1}{\mu_1},...,\frac{\lambda_{k-1}}{\mu_{k-1}},\frac{\lambda_{k+1}}{\mu_{k+1}},...,\frac{\lambda_d}{\mu_d}\bigg).
\end{align*} 
For any $(\alpha,\mu)\in\mathbb{R}_+^*$, we denote by $\gamma(\alpha,\mu)$ a Gamma law with parameters $(\alpha,\mu)$ whose density is:
\begin{align*}
\forall x\in\mathbb{R}_+^*,\ \gamma_{\alpha,\mu}(x)=\dfrac{\mu^\alpha}{\Gamma(\alpha)}x^{\alpha-1}\exp\big(-\mu x\big).
\end{align*}

\begin{thm}\label{benjarras}
Let $d \ge 1$ and $(m_1, \ldots, m_d) \in \mathbb{N}^d$. Let $((\alpha_1,\mu_1),...,(\alpha_d,\mu_d))\in \big(\mathbb{R}_+^*\big)^{2d}$ and $(\lambda_1, \ldots, \lambda_d) \in
\R^{\star}$ and consider: 
\begin{equation*}
   F = -\sum_{i=1}^d\lambda_i\dfrac{m_i\alpha_i}{\mu_i}+\sum_{i=1}^{d}\lambda_i\gamma_i\big(m_i\alpha_i,\mu_i\big),
\end{equation*}
where $\{\gamma_i\big(m_i\alpha_i,\mu_i\big)\}$ is a collection of independent gamma random variables with appropriate parameters.
Let $Y$ be a real valued random variable such that $\E[|Y|]<+\infty$. Then $Y \stackrel{\text{law}}{=} F$ if and only if
\begin{align}
  &\E \bigg[ \big(Y+\sum_{i=1}^d\lambda_i\dfrac{m_i\alpha_i}{\mu_i}\big)(-1)^d\bigg(\prod_{j=1}^d\dfrac{\lambda_j}{\mu_j}\bigg)\phi^{(d)}(Y)+\sum_{l=1}^{d-1}(-1)^l\bigg(Ye_{l}(\frac{\lambda_1}{\mu_1},...,\frac{\lambda_d}{\mu_d})\nonumber\\ &+\sum_{k=1}^d\lambda_k\dfrac{m_k\alpha_k}{\mu_k}\left(e_{l}(\frac{\lambda_1}{\mu_1},...,\frac{\lambda_d}{\mu_d})-e_{l}((\frac{\lambda}{\mu})_k)\right)\bigg)\phi^{(l)}(Y)+Y\phi(Y) \bigg]=0,\label{eq:2}
\end{align}
for all $\phi\in S(\mathbb{R})$.
\end{thm}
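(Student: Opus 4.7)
My approach is Fourier-analytic, exploiting the fact that $\phi_F(t)=\E[e^{itF}]$ is fully explicit. By independence of the $\gamma_i$'s,
\begin{align*}
\phi_F(t)=e^{-itc}\prod_{i=1}^{d}\Bigl(1-i\tfrac{\lambda_i}{\mu_i}t\Bigr)^{-m_i\alpha_i},\qquad c:=\sum_{i=1}^d\lambda_i\tfrac{m_i\alpha_i}{\mu_i},
\end{align*}
so that $\phi_F'(t)/\phi_F(t)=-ic+i\sum_{k=1}^d\beta_k/(1-i(\lambda_k/\mu_k)t)$ with $\beta_k:=\lambda_k m_k\alpha_k/\mu_k$. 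Multiplying by $Q(t):=\prod_{i=1}^d(1-i(\lambda_i/\mu_i)t)$ clears all denominators and produces the polynomial ODE
\begin{align*}
Q(t)\phi_F'(t)+ic\,Q(t)\phi_F(t)-i\sum_{k=1}^d\beta_k\prod_{j\neq k}\Bigl(1-i\tfrac{\lambda_j}{\mu_j}t\Bigr)\phi_F(t)=0.
\end{align*}

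I then expand $Q(t)=\sum_{l=0}^d(-it)^l e_l(\lambda/\mu)$ and $\prod_{j\neq k}(1-i(\lambda_j/\mu_j)t)=\sum_{l=0}^{d-1}(-it)^l e_l((\lambda/\mu)_k)$, and test against $\phi\in S(\R)$ via the standard Fourier-duality formulas
\begin{align*}
\int_\R(-it)^l\widehat{\phi}(t)\phi_Y(t)\tfrac{dt}{2\pi}=(-1)^l\E[\phi^{(l)}(Y)],\qquad \int_\R(-it)^l\widehat{\phi}(t)\phi_Y'(t)\tfrac{dt}{2\pi}=i(-1)^l\E[Y\phi^{(l)}(Y)],
\end{align*}
both justified because $\E|Y|<\infty$ makes $\phi_Y\in C^1(\R)$ with $\phi_Y'(t)=i\E[Ye^{itY}]$. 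Dividing through by $i$ and regrouping by the order $l$ of the derivative, I verify: (i) the $l=0$ coefficient collapses to $\E[Y\phi(Y)]$ because $\sum_k\beta_k=c$; (ii) the $l=d$ contribution is $(-1)^d\prod_j(\lambda_j/\mu_j)\,\E[(Y+c)\phi^{(d)}(Y)]$ with no subtraction (since $\prod_{j\neq k}$ only has degree $d-1$, there is no $e_d((\lambda/\mu)_k)$ term); (iii) each interior term $1\le l\le d-1$ reorganizes via the key identity $c\,e_l(\lambda/\mu)-\sum_k\beta_k\,e_l((\lambda/\mu)_k)=\sum_k\beta_k\bigl(e_l(\lambda/\mu)-e_l((\lambda/\mu)_k)\bigr)$ into precisely the bracket appearing in \eqref{eq:2}. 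This proves the forward implication.

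For the converse, if \eqref{eq:2} holds for every $\phi\in S(\R)$, running the same Fourier duality backwards forces $\phi_Y$ to satisfy the same first-order linear ODE as $\phi_F$. Since the zeros of $Q$ are purely imaginary, $Q(t)\neq 0$ on $\R$, so the ODE is regular throughout the real line; together with the normalization $\phi_Y(0)=1$, unique-solvability of linear ODEs yields $\phi_Y\equiv\phi_F$ and hence $Y\stackrel{\text{law}}{=}F$.

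The principal obstacle is combinatorial rather than analytic: one must carefully handle the three different regimes $l=0$ (where a cancellation leaves only the term $Y\phi(Y)$), $1\le l\le d-1$ (where the delicate difference $e_l(\lambda/\mu)-e_l((\lambda/\mu)_k)$ emerges from the degree mismatch between $Q$ and $\prod_{j\neq k}$), and $l=d$ (where the $(Y+c)$ factor reappears precisely because the second sum no longer contributes). The Fourier-inversion and ODE-uniqueness steps are otherwise routine.
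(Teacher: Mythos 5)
Your proof is correct and follows essentially the same Fourier-analytic strategy as the paper: derive a first-order ODE for $\phi_F$ with polynomial coefficient $Q$, test it against Schwartz functions via Fourier duality to obtain the Stein identity, and run the argument backwards plus Cauchy-Lipschitz uniqueness (using $Q\neq 0$ on $\R$) for the converse. Your bookkeeping is slightly cleaner: by expanding directly in $1-i(\lambda_j/\mu_j)t$ rather than introducing $\nu_j=\mu_j/\lambda_j$, you avoid the paper's Lévy--Khintchine detour and its final rescaling of the identity by $\prod_j\lambda_j/\mu_j$ via $e_k(\nu)=\bigl(\prod\mu_j/\prod\lambda_j\bigr)e_{d-k}(\lambda/\mu)$.
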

\begin{proof}
$(\Rightarrow)$. Let $F$ be as in the statement of the theorem. We denote by $J_+=\{j\in\{1,...,d\}:\lambda_j>0\}$ and $J_-=\{j\in\{1,...,d\}:\lambda_j<0\}$. Let us compute the characteristic function of $F$. For any $\xi\in \mathbb{R}$, we have:
\begin{align*}
\phi_F(\xi)&=\E[\exp(i\xi F)],\\
&=\exp\bigg(-i\xi\sum_{k=1}^d\lambda_k\dfrac{m_k\alpha_k}{\mu_k}\bigg)\prod_{j=1}^d\E\bigg[\exp\bigg(i\xi\lambda_j\gamma_j(m_j\alpha_j,\mu_j)\bigg)\bigg],\\
&=\exp\bigg(-i\xi<m\alpha;\lambda/\mu>\bigg)\prod_{j\in J_+}\bigg(\exp\bigg(\int_0^{+\infty}\bigg(e^{i\xi\lambda_j x}-1\bigg)\bigg(\frac{m_j\alpha_j}{x}e^{-\mu_jx}\bigg)dx\bigg)\bigg)\\
&\times\prod_{j\in J_-}\bigg(\exp\bigg(\int_0^{+\infty}\bigg(e^{i\xi\lambda_j x}-1\bigg)\bigg(\frac{m_j\alpha_j}{x}e^{-x\mu_j}\bigg)dx\bigg)\bigg),\\
&=\exp\bigg(-i\xi<m\alpha;\lambda/\mu>\bigg)\exp\bigg(\int_0^{+\infty}\bigg(e^{i\xi x}-1\bigg)\bigg(\sum_{j\in J_+}\frac{m_j\alpha_j}{x}e^{-\frac{x\mu_j}{\lambda_j}}\bigg)dx\bigg)\\
&\times\exp\bigg(\int_0^{+\infty}\bigg(e^{-i\xi x}-1\bigg)\bigg(\sum_{j\in J_-}\frac{m_j\alpha_j}{x}e^{-\frac{x\mu_j}{(-\lambda_j)}}\bigg)dx\bigg),
\end{align*} 
where we have used the Lévy-Khintchine representation of the Gamma distribution. We denote $\mu_j/\lambda_j$ by $\nu_j$. Differentiating with respect to $\xi$ together with standard computations, we obtain:
\begin{align*}
\prod_{k=1}^d(\nu_k-i\xi)\dfrac{d}{d\xi}\bigg(\phi_F(\xi)\bigg)=\bigg[-i<m\alpha,\lambda/\mu>\prod_{k=1}^d(\nu_k-i\xi)+i\sum_{k=1}^dm_k\alpha_k\prod_{l=1,l\ne k}^d(\nu_l-i\xi)\bigg]\phi_F(\xi).
\end{align*}
Let us introduce two differential operators characterized by their symbols in Fourier domain. For smooth enough test functions, $\phi$, we define:
\begin{align*}
&\mathcal{A}_{d,\nu}(\phi)(x)=\frac{1}{2\pi}\int_{\mathbb{R}}\mathcal{F}(\phi)(\xi)\bigg(\prod_{k=1}^d(\nu_k-i\xi)\bigg)\exp(ix\xi)d\xi,\\
&\mathcal{B}_{d,m,\nu}(\phi)(x)=\frac{1}{2\pi}\int_{\mathbb{R}}\mathcal{F}(\phi)(\xi)\bigg(\sum_{k=1}^dm_k\alpha_k\prod_{l=1,l\ne k}^d(\nu_l-i\xi)\bigg)\exp(ix\xi)d\xi,\\
&\mathcal{F}(\phi)(\xi)=\int_{\mathbb{R}}\phi(x)\exp(-ix\xi)dx.
\end{align*}
Integrating against smooth test functions the differential equation satistifed by the characteristic function $\phi_F$, we have, for the left hand side:
\begin{align*}
\int_{\mathbb{R}}\mathcal{F}(\phi)(\xi)\bigg(\prod_{k=1}^d(\nu_k-i\xi)\bigg)\dfrac{d}{d\xi}\bigg(\phi_F(\xi)\bigg)d\xi&=\int_{\mathbb{R}}\mathcal{F}\big(\mathcal{A}_{d,\nu}(\phi)\big)(\xi)\dfrac{d}{d\xi}\bigg(\phi_F(\xi)\bigg)d\xi,\\
&=-\int_{\mathbb{R}}\dfrac{d}{d\xi}\bigg(\mathcal{F}\big(\mathcal{A}_{d,\nu}(\phi)\big)(\xi)\bigg)\phi_F(\xi)d\xi,\\
&=i\int_{\mathbb{R}}\mathcal{F}\big(x\mathcal{A}_{d,\nu}(\phi)\big)(\xi)\phi_F(\xi)d\xi,
\end{align*}
where we have used the standard fact $d/d\xi(\mathcal{F}(f)(\xi))=-i\mathcal{F}(xf)(\xi)$. Similarly, for the right hand side, we obtain:
\begin{align*}
\operatorname{RHS}&=\int_{\mathbb{R}}\mathcal{F}(\phi)(\xi)\bigg[-i<m\alpha,\lambda/\mu>\prod_{k=1}^d(\nu_k-i\xi)+i\sum_{k=1}^dm_k\alpha_k\prod_{l=1,l\ne k}^d(\nu_l-i\xi)\bigg]\phi_F(\xi)d\xi,\\
&=i\int_{\mathbb{R}}\mathcal{F}\big(-<m\alpha,\lambda/\mu>\mathcal{A}_{d,\nu}(\phi)+\mathcal{B}_{d,m,\nu}(\phi)\big)(\xi)\phi_F(\xi)d\xi.
\end{align*}
Thus,
\begin{align*}
\int_{\mathbb{R}}\mathcal{F}\big((x+<m\alpha,\lambda/\mu>)\mathcal{A}_{d,\nu}(\phi)-\mathcal{B}_{d,m,\nu}(\phi)\big)(\xi)\phi_F(\xi)d\xi=0
\end{align*}
Going back in the space domain, we obtain the following Stein-type characterization formula:
\begin{align*}
\E\big[(F+<m\alpha,\lambda/\mu>)\mathcal{A}_{d,\nu}(\phi)(F)-\mathcal{B}_{d,m,\nu}(\phi)(F)\big]=0.
\end{align*}
In order to conclude the first half of the proof, we need to compute explicitely the coefficients of the operators $\mathcal{A}_{d,\nu}$ and $\mathcal{B}_{d,m,\nu}$ in the following expansions:
\begin{align*}
\mathcal{A}_{d,\nu}=\sum_{k=0}^da_k\dfrac{d^k}{dx^k},\\
\mathcal{B}_{d,m,\nu}=\sum_{k=0}^{d-1}b_k\dfrac{d^k}{dx^k}.
\end{align*}
First of all, let us consider the following polynomial in $\mathbb{R}[X]$:
\begin{align*}
P(x)=\prod_{j=1}^d(\nu_j-x)=(-1)^d\prod_{j=1}^d(x-\nu_j).
\end{align*}
We denote by $p_0,...,p_d$ the coefficients of $\prod_{j=1}^d(X-\nu_j)$ in the basis $\{1,X,...,X^d\}$. Vieta formula readily give:
\begin{align*}
\forall k\in\{0,...,d\},\ p_k=(-1)^{d+k}e_{d-k}(\nu_1,...,\nu_d),
\end{align*}
It follows that the Fourier symbol of $\mathcal{A}_{d,\nu}$ is given by:
\begin{align*}
\prod_{k=1}^d(\nu_k-i\xi)=P(i\xi)=\sum_{k=0}^d(-1)^ke_{d-k}(\nu_1,...\nu_d)(i\xi)^k.
\end{align*}
Thus, we have, for $\phi$ smooth enough:
\begin{align*}
\mathcal{A}_{d,\nu}(\phi)(x)=\sum_{k=0}^d(-1)^{k}e_{d-k}(\nu_1,...,\nu_d)\phi^{(k)}(x).
\end{align*}
Let us proceed similarly for the operator $B_{d,m,\nu}$. We denote by $P_k$ the following polynomial in $\mathbb{R}[X]$ (for any $k\in\{1,...,d\}$):
\begin{align*}
P_k(x)=(-1)^{d-1}\prod_{l=1,l\ne k}^d(x-\nu_l).
\end{align*}
A similar argument provides the following expression:
\begin{align*}
P_k(x)=\sum_{l=0}^{d-1}(-1)^le_{d-1-l}(\underline{\nu}_k)x^l,
\end{align*}
where $\underline{\nu}_k=(\nu_1,...,\nu_{k-1},\nu_{k+1},...,\nu_d)$. Thus, the symbol of the differential operator $B_{d,m,\nu}$ is given by:
\begin{align*}
\sum_{k=1}^dm_k\alpha_k\prod_{l=1,l\ne k}^d(\nu_l-i\xi)=\sum_{l=0}^{d-1}(-1)^l\bigg(\sum_{k=1}^dm_k\alpha_ke_{d-1-l}(\underline{\nu}_k)\bigg)(i\xi)^l.
\end{align*}
Thus, we have:
\begin{align*}
B_{d,m,\nu}(\phi)(x)=\sum_{l=0}^{d-1}(-1)^l\bigg(\sum_{k=1}^dm_k\alpha_ke_{d-1-l}(\underline{\nu}_k)\bigg)\phi^{(k)}(x).
\end{align*}
Consequently, we obtain:
\begin{align*}
&\E\big[(F+<m\alpha,\lambda/\mu>)\sum_{k=0}^d(-1)^{k}e_{d-k}(\nu_1,...,\nu_d)\phi^{(k)}(F)\\
&-\sum_{l=0}^{d-1}(-1)^l\bigg(\sum_{k=1}^dm_k\alpha_ke_{d-1-l}(\underline{\nu}_k)\bigg)\phi^{(k)}(F)\big]=0.
\end{align*}
Finally, there is a straightforward relationship between $e_{k}(\nu_1,...,\nu_d)$ and $e_{d-k}(\lambda_1/\mu_1,...,\lambda_d/\mu_d)$. Namely,
\begin{align*}
\forall k\in\{0,...,d\},\ e_{k}(\nu_1,...,\nu_d)=\dfrac{\prod_{j=1}^d\mu_j}{\prod_{j=1}^d\lambda_j}e_{d-k}(\frac{\lambda_1}{\mu_1},...,\frac{\lambda_d}{\mu_d}).
\end{align*}
Thus, multiplying by $\prod_{j=1}^d\lambda_j/\prod_{j=1}^d\mu_j$, the previous Stein-type characterisation equation, we have:
\begin{align*}
&\E\bigg[(F+<m\alpha,\lambda/\mu>)(-1)^d\bigg(\prod_{j=1}^d\frac{\lambda_j}{\mu_j}\bigg)\phi^{(d)}(F)+\sum_{l=1}^{d-1}(-1)^l\bigg(Fe_{l}(\frac{\lambda_1}{\mu_1},...,\frac{\lambda_d}{\mu_d})\\
&+\sum_{k=1}^d\lambda_km_k\frac{\alpha_k}{\mu_k}\big(e_{l}(\frac{\lambda_1}{\mu_1},...,\frac{\lambda_d}{\mu_d})-e_{l}((\frac{\lambda}{\mu})_k)\big)\bigg)\phi^{(l)}(F)+F\phi(F)\bigg]=0.
\end{align*}
$(\Leftarrow)$ Let $Y$ be a real valued random variable such that $\E[|Y|]<+\infty$ and:
\begin{align*}
\forall \phi\in S(\mathbb{R}),\ &\E \bigg[ (Y+<m\alpha,\lambda/\mu>)(-1)^d\bigg(\prod_{j=1}^d\frac{\lambda_j}{\mu_j}\bigg)\phi^{(d)}(Y)+\sum_{l=1}^{d-1}(-1)^l\bigg(Ye_{l}(\frac{\lambda_1}{\mu_1},...,\frac{\lambda_d}{\mu_d})\\ &+\sum_{k=1}^d\lambda_km_k\frac{\alpha_k}{\mu_k}\big(e_{l}(\frac{\lambda_1}{\mu_1},...,\frac{\lambda_d}{\mu_d})-e_{l}((\frac{\lambda}{\mu})_k)\big)\bigg)\phi^{(l)}(Y)+Y\phi(Y) \bigg]=0.
\end{align*}
By the previous step, this implies that:
\begin{align*}
&\forall \phi\in S(\mathbb{R}),\ \int_{\mathbb{R}}\mathcal{F}\big((x+<m\alpha,\lambda/\mu>)\mathcal{A}_{d,\nu}(\phi)-\mathcal{B}_{d,m,\nu}(\phi)\big)(\xi)\phi_Y(\xi)d\xi=0,\\
&\Leftrightarrow\ \int_{\mathbb{R}}\mathcal{F}\big(x\mathcal{A}_{d,\nu}(\phi)\big)(\xi)\phi_Y(\xi)d\xi=\int_{\mathbb{R}}\mathcal{F}\big(-<m\alpha,\lambda/\mu>\mathcal{A}_{d,\nu}(\phi)+\mathcal{B}_{d,m,\nu}(\phi)\big)(\xi)\phi_Y(\xi)d\xi,\\
&\Leftrightarrow\ \prod_{k=1}^d(\nu_k-i\xi)\dfrac{d}{d\xi}\bigg(\phi_Y\bigg)(.)=\bigg[-i<m\alpha,\lambda/\mu>\prod_{k=1}^d(\nu_k-i\xi)+i\sum_{k=1}^d\alpha_km_k\prod_{l=1,l\ne k}^d(\nu_l-i\xi)\bigg]\phi_Y(.),
\end{align*}
in $S'(\mathbb{R})$. Since $\E[|Y|]<+\infty$, the characteristic function of $Y$ is differentiable on the whole real line so that:
\begin{align*}
\forall\xi\in\mathbb{R},\ \dfrac{d}{d\xi}\bigg(\phi_Y\bigg)(\xi)=\bigg[-i<m\alpha,\lambda/\mu>+i\sum_{k=1}^dm_k\alpha_k\dfrac{1}{\nu_k-i\xi}\bigg]\phi_Y(\xi)
\end{align*}
Moreover, we have $\phi_Y(0)=1$. Thus, by Cauchy-Lipschitz theorem, we have:
\begin{align*}
\forall\xi\in\mathbb{R},\ \phi_Y(\xi)=\phi_F(\xi).
\end{align*}
This concludes the proof of the theorem.
\end{proof}

Taking $\alpha_k=\mu_k=1/2$ in the previous theorem implies the following straightforward corollary:

\begin{cor}\label{benjarras2}
Let $d \ge 1$, $q\geq 1$ and $(m_1, \ldots, m_d) \in \mathbb{N}^d$ such that $m_1+...+m_d=q$. Let $(\lambda_1, \ldots, \lambda_d) \in
\R^{\star}$ and consider: 
\begin{equation*}
   F = \sum_{i=1}^{m_1}\lambda_1(N_i^2-1)+\sum_{i=m_1+1}^{m_1+m_2}\lambda_2(N_i^2-1)+...+\sum_{i=m_1+\ldots+m_{d-1}+1}^q\lambda_d(N_i^2-1),
\end{equation*}
Let $Y$ be a real valued random variable such that $\E[|Y|]<+\infty$. Then $Y \stackrel{\text{law}}{=} F$ if and only if
\begin{align}
  &\E \bigg[ \big(Y+\sum_{i=1}^d\lambda_im_i\big)(-1)^d2^d\bigg(\prod_{j=1}^d\lambda_j\bigg)\phi^{(d)}(Y)+\sum_{l=1}^{d-1}2^l(-1)^l\bigg(Ye_{l}(\lambda_1,...,\lambda_d)\nonumber\\ &+\sum_{k=1}^d\lambda_km_k\left(e_{l}(\lambda_1,...,\lambda_d)-e_{l}((\underline{\lambda}_k)\right)\bigg)\phi^{(l)}(Y)+Y\phi(Y) \bigg]=0,\label{eq:3}
\end{align}
for all $\phi\in S(\mathbb{R})$.
\end{cor}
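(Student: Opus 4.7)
The plan is to obtain Corollary~\ref{benjarras2} by directly specializing Theorem~\ref{benjarras} to the Gamma parameters $\alpha_k=\mu_k=1/2$ and then translating the resulting Stein-type identity via the scaling property of the elementary symmetric polynomials. Since Theorem~\ref{benjarras} has already been proved, no new probabilistic content is needed; the task reduces to a careful bookkeeping of constants.

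First I would identify the random variable. With $\alpha_i=\mu_i=1/2$, the independent variables $\gamma_i(m_i\alpha_i,\mu_i)=\gamma_i(m_i/2,1/2)$ are chi-squared with $m_i$ degrees of freedom, hence equal in law to $\sum_{j=1}^{m_i} N_{i,j}^2$ for independent standard Gaussians. The centering constant becomes $\lambda_i m_i\alpha_i/\mu_i=\lambda_i m_i$, so
\begin{equation*}
F \stackrel{\text{law}}{=} \sum_{i=1}^{d}\lambda_i\!\sum_{j=1}^{m_i}(N_{i,j}^2-1),
\end{equation*}
which, after relabelling indices consecutively from $1$ to $q=m_1+\cdots+m_d$, is exactly the variable displayed in the statement of the corollary.

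Next I would rewrite the Stein identity~\eqref{eq:2}. The substitution gives $\lambda_j/\mu_j = 2\lambda_j$, and by the homogeneity property $e_l(c x_1,\ldots,c x_d) = c^l e_l(x_1,\ldots,x_d)$ of the elementary symmetric polynomials we obtain
\begin{equation*}
e_l\!\left(\tfrac{\lambda_1}{\mu_1},\ldots,\tfrac{\lambda_d}{\mu_d}\right) = 2^l e_l(\lambda_1,\ldots,\lambda_d), \qquad e_l\!\left(\left(\tfrac{\lambda}{\mu}\right)_k\right) = 2^l e_l(\underline{\lambda}_k),
\end{equation*}
where $\underline{\lambda}_k=(\lambda_1,\ldots,\lambda_{k-1},\lambda_{k+1},\ldots,\lambda_d)$, and for the leading coefficient $\prod_{j=1}^d \lambda_j/\mu_j = 2^d\prod_{j=1}^d \lambda_j$. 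Likewise $\sum_i \lambda_i m_i\alpha_i/\mu_i = \sum_i \lambda_i m_i$ and $\lambda_k m_k\alpha_k/\mu_k = \lambda_k m_k$. Plugging these equalities term-by-term into~\eqref{eq:2} produces exactly identity~\eqref{eq:3}.

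There is essentially no serious obstacle here: the argument is purely algebraic, and the only thing to be careful about is the consistent use of the scaling factor $2^l$ in all summands (in particular, that the $2^l$ in front of $e_l(\lambda_1,\ldots,\lambda_d)$ and the $2^l$ in front of $e_l(\underline{\lambda}_k)$ combine into a single $2^l$ factor multiplying the bracketed difference). Once this scaling is tracked, both directions of the equivalence follow immediately from the corresponding directions in Theorem~\ref{benjarras}, concluding the proof.
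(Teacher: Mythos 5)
Your proposal is correct and follows essentially the same route as the paper: both identify $F$ with the shifted sum of independent $\gamma_i(m_i/2,1/2)$ variables and invoke Theorem~\ref{benjarras} with $\alpha_k=\mu_k=1/2$. You merely spell out the homogeneity bookkeeping ($\lambda_j/\mu_j = 2\lambda_j$, $e_l(2\lambda_1,\ldots,2\lambda_d)=2^l e_l(\lambda_1,\ldots,\lambda_d)$) that the paper leaves implicit, and the resulting substitution does reproduce the factors $2^d$ and $2^l$ in \eqref{eq:3} exactly.
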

\begin{proof} Let $F$ be as in the statement of the theorem. Then, it is sufficient to observe that we have the following equality in law:
\begin{align*}
F\stackrel{\text{law}}{=}-\sum_{k=1}^dm_k\lambda_k+\sum_{i=1}^d\lambda_i\gamma_i\big(\frac{m_i}{2},\frac{1}{2}\big).
\end{align*}
To end the proof of the corollary, we apply the previous theorem with $\alpha_k=\mu_k=1/2$ for every $k$.
\end{proof}

\begin{ex}\label{ex:eicheltha}{ \rm 
Let $d=1$, $m_1=q\geq 1$ and $\lambda_1=\lambda>0$. The differential operator reduces to (on smooth test function $\phi$):
\begin{align*}
-2\lambda(x+q\lambda)\phi^{(1)}(x)+x\phi(x).
\end{align*}
This differential operator is similar to the one characterising the gamma distribution of parameters $(q/2,1/(2\lambda))$. Indeed, we have, for $F\stackrel{\text{law}}{=}\gamma\big(q/2,1/(2\lambda)\big)$, on smooth test function, $\phi$:
\begin{align*}
\E\bigg[F\phi^{(1)}\big(F\big)+\big(\frac{q}{2}-\frac{F}{2\lambda}\big)\phi\big(F\big)\bigg]=0
\end{align*}
We can move from the first differential operator to the second one by performing a scaling of parameter $-1/(2\lambda)$ and the change of variable $x=y-q\lambda$.
}
\end{ex}

\begin{ex}{ \rm 
Let $d=2,\ q=2$, $\lambda_1=-\lambda_2=1/2$ and $m_1=m_2=1$. The differential operator reduces to (on smooth test function $\phi$):
\begin{align*}
T(\phi)(x)&=4(x+<m,\lambda>)\lambda_1\lambda_2\phi^{(2)}(x)-2\big[xe_1(\lambda_1,\lambda_2)+\lambda_1m_1(e_1(\lambda_1,\lambda_2)-e_1(\lambda_2))\\
&+\lambda_2m_2(e_1(\lambda_1,\lambda_2)-e_1(\lambda_1))\big]\phi^{(1)}(x)+x\phi(x),\\
&=-x\phi^{(2)}(x)-\phi^{(1)}(x)+x\phi(x),
\end{align*}
where we have used the fact that $e_1(\lambda_1,\lambda_2)=\lambda_1+\lambda_2=0,\ e_1(\lambda_2)=\lambda_2=-1/2,\ e_1(\lambda_1)=\lambda_1=1/2$. Therefore, up to a minus sign factor, we retrieve the differential operator associated with the random variable:
\begin{align*}
F=N_1\times N_2.
\end{align*}
}
\end{ex}

\subsection{Malliavin-based approach}\label{sec:mall-based-appr}
In this section, we assume that the random objects we consider do live
in the Wiener space. Let $\rm X=\{X(h); \ h \in \HH\}$ stand for an
isonormal process over a separable Hilbert space $\HH$. The reader may
consult \cite[Chapter 2]{n-pe-1} for a detailed discussion on this
topic. The main aim of this section is to use Malliavin calculus on
the Wiener space to obtain a Stein characterization for target random
variables of the form $(\ref{target-wiener1})$. 
The following definition includes the iterated Malliavin
$\Gamma$-operators that lie at the core of this approach. The notation
$\mathbb{D}^{\infty}$ stands for the class of infinitely many times
Malliavin differentiable random variables.

\begin{definition}[see \cite{n-pe-1}]\label{Def : Gamma}  Let $F\in \mathbb{D}^{\infty}$. The sequence of random variables $\{\Gamma_i(F)\}_{i\geq 0}\subset
\mathbb{D}^\infty$ is recursively defined as follows. Set $\Gamma_0(F) = F$
and, for every $i\geq 1$, \[\Gamma_{i}(F) = \langle DF,-DL^{-1}\Gamma_{i-1}(F)\rangle_{\HH}.
\]
For instance, one has that $\Gamma_1(F) = \langle
DF,-DL^{-1}F\rangle_{\HH}= \tau_F(F)$ the Stein kernel of $F$.
\end{definition}
For further use, we also recall that (see again \cite{n-pe-1}) the
cumulants of the random element $F$ and the iterated Malliavin
$\Gamma$- operators are linked by the relation
$$\kappa_{r+1}(F)=r! \E [\Gamma_r(F)] \mbox{ for } r=0,1,\cdots.$$
Following \cite{n-po-1,a-p-p}, we define two crucial polynomials $P$
and $Q$ as follows:
\begin{equation}\label{polynomialP}
 Q(x)=\big( P(x)\big)^{2}=\Big(x \prod_{i=1}^{q}(x - \alpha_{\infty,i} ) \Big)^{2}.
\end{equation}
Finally, for any random element $F$, we define the following quantity
(whose first appearance is in \cite{a-p-p})
\begin{equation}\label{eq:Delta}
\Delta(F, F_{\infty}):= \sum_{r=2}^{\text{deg}(Q)} \frac{Q^{(r)}(0)}{r!} \frac{\kappa_{r}(F)}{2^{r-1}(r-1)!}.
\end{equation}

\begin{prop}\cite[Proposition 3.2]{a-p-p}\label{p:static}
Let $F$ be a centered random variable living in a finite sum of Wiener chaoses. Moreover, assume that
\begin{itemize}
 \item[\bf (i)]$\kappa_r (F) = \kappa_r (F_\infty)$, for all $2 \le r \le k+1=\text{deg}(P)$, and
\item[\bf (ii)] 

\begin{equation*}
              \E \Bigg[ \sum_{r=1}^{k+1} \frac{P^{(r)}(0)}{r! \
                2^{r-1}} \Big( \Gamma_{r-1}(F) -   \E[\Gamma_{r-1}(F)]
              \Big) \Bigg]^2 = 0. 
  \end{equation*}
\end{itemize}
Then, $F \stackrel{{\rm law}}{=} F_\infty,$ and $F$ belongs to the second Wiener chaos.
\end{prop}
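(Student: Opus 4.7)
The plan is to exploit the interplay between Malliavin's iterated $\Gamma$-operators, cumulants, and the polynomial $P$, splitting the argument naturally into two halves: first, showing that $F$ must lie in the second Wiener chaos; second, identifying its law with $F_\infty$. Hypothesis (ii) is an $L^2$-vanishing statement for a centered quantity, so it immediately gives that the random variable
\[
W \;:=\; \sum_{r=1}^{k+1} \frac{P^{(r)}(0)}{r!\,2^{r-1}}\, \Gamma_{r-1}(F)
\]
is almost surely equal to $\E[W]$. The whole proof consists in squeezing out consequences of this almost-sure identity.

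For the chaos-reduction half, I would expand $F = \sum_{p=1}^{m} I_p(f_p)$ in its finite chaotic decomposition and compute the chaotic expansion of $W$ using the product formulas for multiple Wiener integrals together with the recursive definition of the $\Gamma_r$. The key combinatorial claim, and the main obstacle of the proof, is that if any $f_p$ with $p \ge 3$ is non-zero then the corresponding higher-chaos components of $W$ cannot cancel: the polynomial $P$ has only $q+1$ roots, and once one passes above the second chaos the spectral interpretation of the $\Gamma_r$ (available in the second chaos, see below) breaks down in a quantifiable way that forbids cancellation. Enforcing $W - \E[W] = 0$ chaos-by-chaos then forces $f_p = 0$ for every $p \ne 2$, so $F = I_2(f_2)$ belongs to the second Wiener chaos.

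For the identification half, invoke the spectral representation $F = \sum_j a_j (N_j^2 - 1)$, where $(N_j)$ are i.i.d.\ $\mathscr{N}(0,1)$ and $(a_j)$ is the spectrum of the Hilbert--Schmidt operator associated with $f_2$. A straightforward induction based on $-L^{-1} = \tfrac{1}{2}\,\mathrm{Id}$ on the second chaos and $DF = 2\sum_j a_j N_j e_j$ gives $\Gamma_r(F) = 2^r \sum_j a_j^{r+1} N_j^2$ for $r \ge 1$. Substituting into $W$ and using the Taylor expansion $P(x) = \sum_{r=1}^{k+1} \frac{P^{(r)}(0)}{r!} x^r$ (which holds since $P(0) = 0$), one finds
\[
W - \E[W] \;=\; \sum_j P(a_j)\, (N_j^2 - 1).
\]
The $L^2$-orthogonality of the family $\{N_j^2 - 1\}_j$ forces $P(a_j) = 0$ for every $j$, so each $a_j \in \{0, \alpha_{\infty,1}, \ldots, \alpha_{\infty, q}\}$. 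Hypothesis (i) then pins down the multiplicities: writing $n_i$ for the multiplicity of $\alpha_{\infty,i}$ in the spectrum (the root $0$ is irrelevant) and using $\kappa_r(F) = 2^{r-1}(r-1)! \sum_j a_j^r$, the cumulant-matching conditions for $r = 2, \ldots, q+1$ form a Vandermonde-type linear system in the distinct scalars $\alpha_{\infty,i}$, whose unique solution is $n_i = 1$ for every $i$. This yields $F \stackrel{\text{law}}{=} F_\infty$, completing the proof; the delicate combinatorics of the chaos-reduction step remains the principal difficulty.
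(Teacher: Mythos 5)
Your treatment of the ``identification half'' is essentially complete and correct, \emph{granting} that $F$ lies in the second chaos: the induction $\Gamma_r(F)=2^r\sum_j a_j^{r+1}N_j^2$ for $r\ge 1$ is right (it uses $-L^{-1}=\tfrac12\,\mathrm{Id}$ on the second chaos), the identity $W-\E[W]=\sum_j P(a_j)(N_j^2-1)$ follows cleanly from the Taylor expansion of $P$ about $0$ together with $P(0)=0$, the $L^2$-orthogonality of the family $\{N_j^2-1\}_j$ forces $P(a_j)=0$ so each $a_j$ is a root of $P$, and the moment-matching system $\sum_{i=1}^q (n_i-1)\,\alpha_{\infty,i}^r = 0$ for $r=2,\dots,q+1$ has the coefficient matrix $\big(\alpha_{\infty,i}^r\big)_{r,i}$, whose determinant is a nonzero multiple of a Vandermonde determinant because the $\alpha_{\infty,i}$ are distinct and nonzero, so $n_i=1$ for all $i$ and $F\stackrel{\text{law}}{=}F_\infty$.

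The gap is that the ``chaos-reduction half'' --- showing that hypothesis (ii) alone forces $F$ into the second Wiener chaos --- is not actually argued; you yourself flag it as ``the main obstacle'' and close by saying the ``delicate combinatorics of the chaos-reduction step remains the principal difficulty.'' This step is precisely the substantive content of the proposition, not a routine obstacle one can wave away by appealing to the number of roots of $P$. A direct argument would have to track the top-chaos component of the $\Gamma$-operators: if $F$ has a nonzero component $I_m(f_m)$ with $m\ge 3$, then $\deg\Gamma_r(F)=m+r(m-2)$ is strictly increasing in $r$, so the highest chaos level $m+k(m-2)$ appearing in $W=\sum_{r=1}^{k+1}\frac{P^{(r)}(0)}{r!\,2^{r-1}}\Gamma_{r-1}(F)$ receives a contribution from the $r=k+1$ term only, with coefficient $P^{(k+1)}(0)/\big((k+1)!\,2^k\big)=2^{-k}\ne0$ since $P$ is monic of degree $k+1=q+1$. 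Vanishing of $W-\E[W]$ then forces the top-chaos kernel of $\Gamma_k(F)$ --- an iterated contraction of $f_m$ with itself --- to vanish, and one still needs the non-trivial lemma that this forces $f_m=0$. That non-degeneracy statement is the heart of \cite[Proposition 3.2]{a-p-p} and is not supplied here. Separately, a first-chaos component ($m=1$) is not excluded by any top-chaos count and would need its own argument, which the proposal also omits.
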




In fact item {\bf (ii)} of Proposition \ref{p:static} can be used to
derive a Stein equation for $F_\infty$. To this end, set
\begin{align*}
 &  a_l= \frac{P^{(l)}(0)}{l! 2^{l-1}} \quad 1 \le l \le q+1,\\
&  b_l= \sum_{r=l}^{q+1} a_r \E[\Gamma_{r-l+1}(F_\infty)] =  \sum_{r=l}^{q+1} \frac{a_r}{(r-l+1)!} \kappa_{r-l+2}(F_\infty)  \quad 2 \le l \le q+1
\end{align*}
Now, we introduce the following differential operator of order $q$
(acting on functions $f \in C^q(\R)$) : 
\begin{equation}\label{eq:SME} 
\mathcal{A}_\infty f (x):= \sum_{l=2}^{q+1} (b_l - a_{l-1} x ) f^{(q+2-l)}(x) - a_{q+1} x f(x).
\end{equation}
Then, we have the following result.

\begin{thm}\label{thm:SMC}
Assume that $F$ is a general centered random variable living in a finite sum of Wiener chaoses (and hence smooth in the
sense of Malliavin calculus).  Then  $$F \stackrel{\text{law}}{=}
F_\infty$$ if and only if $\E \left[ \mathcal{A}_\infty f (F) \right]
=0$ for all mappings $f:\R \to \R$ such that $\E \left[ \vert
\mathcal{A}_\infty f (F) \vert \right] < \infty$, and moreover
$\E[f^{(q)}(F)^2]< +\infty$. 
\end{thm}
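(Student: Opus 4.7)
The plan is to establish both implications via the iterated Malliavin integration-by-parts formula
\begin{equation*}
  \E[F\,g(F)] = \E[F]\,\E[g(F)] + \sum_{j \geq 1}\E[\Gamma_j(F)]\,\E[g^{(j)}(F)],
\end{equation*}
valid for any $F \in \mathbb{D}^\infty$ in a finite sum of Wiener chaoses (the sum terminates since the $\Gamma_j(F)$'s are polynomials in the coordinate Gaussians).

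\emph{Forward direction.} Assume $F \stackrel{\text{law}}{=} F_\infty$. Expanding $\E[\mathcal{A}_\infty f(F_\infty)]$ by applying the above identity to each term $\E[F_\infty\,f^{(k)}(F_\infty)]$ and regrouping by the order $n$ of derivative of $f$, the coefficient of $\E[f^{(n)}(F_\infty)]$ for $1 \leq n \leq q$ is
$$b_{q+2-n} - \sum_{k=q+2-n}^{q+1}a_k\,\E[\Gamma_{n-q-1+k}(F_\infty)],$$
which vanishes by the very definition of $b_l$. For $n \geq q+1$, using the closed-form expression $\E[\Gamma_s(F_\infty)] = 2^s\sum_{i=1}^q\alpha_{\infty,i}^{s+1}$ (valid for $s \geq 1$) together with the identity $a_r\,2^{r-1} = P^{(r)}(0)/r!$ and $P(0)=0$, the coefficient factors as $-2^{n-q}\sum_{i=1}^q\alpha_{\infty,i}^{n-q}P(\alpha_{\infty,i})$, which vanishes because each $\alpha_{\infty,i}$ is a root of $P$.

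\emph{Reverse direction.} Assume $\E[\mathcal{A}_\infty f(F)] = 0$ for every admissible $f$. Taking $f \equiv 1$ yields $\E[F] = 0$. Taking $f(x) = x^m$ (admissible since a finite-chaos random variable has finite moments of every order), the only term that involves $\E[F^{m+1}]$ is $-a_{q+1}\E[F^{m+1}]$, and $a_{q+1} = 1/2^q \neq 0$. The Stein identity thus expresses $\E[F^{m+1}]$ as an explicit linear combination of strictly-lower moments of $F$ and of constants depending only on $\alpha_{\infty,1},\ldots,\alpha_{\infty,q}$. Induction on $m$ determines every moment of $F$, and since $F_\infty$ satisfies the same identity the moments of $F$ and $F_\infty$ coincide. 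Because both random variables live in a finite sum of Wiener chaoses they are moment-determinate (hypercontractivity provides enough moment growth to apply the Carleman criterion), and the conclusion $F \stackrel{\text{law}}{=} F_\infty$ follows.

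\emph{Main obstacle.} The heaviest step is the combinatorial bookkeeping underlying the forward direction — in particular the collapse, for $n \geq q+1$, of the inner sum $\sum_{k=1}^{q+1}a_k\,2^{k-1}\alpha_{\infty,i}^k$ into $P(\alpha_{\infty,i})-P(0)=P(\alpha_{\infty,i})=0$ via the relation $a_r\,2^{r-1} = P^{(r)}(0)/r!$. A cleaner alternative is to identify $\mathcal{A}_\infty$ with $-2^{-q}$ times the Fourier-derived operator of Corollary \ref{benjarras2} (taking $d$ equal to the number of distinct $\alpha_{\infty,i}$'s and $m_i$ their multiplicities) and invoke that corollary directly; this reduces the whole argument to an algebraic comparison between the pairs $(a_r, b_l)$ and the elementary symmetric polynomials in $\alpha_{\infty,1},\ldots,\alpha_{\infty,q}$, a comparison driven by precisely the same polynomial identity $P(\alpha_{\infty,i})=0$.
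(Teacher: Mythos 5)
Your reverse direction is correct and in fact cleaner than the paper's: taking $f(x)=x^m$ and isolating the leading coefficient $-a_{q+1}=-2^{-q}$ gives a direct moment recursion, which together with moment-determinacy (Carleman via hypercontractivity) identifies the law. The paper instead first extracts $\kappa_r(F)=\kappa_r(F_\infty)$ for $r\le q+1$, deduces the conditional identity $\E[\sum_r a_r(\Gamma_{r-1}(F)-\E[\Gamma_{r-1}(F)])\,|\,F]=0$, and argues somewhat less explicitly.

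The forward direction, however, has a genuine gap. Your entire computation rests on the infinite-series identity
\begin{equation*}
\E[F\,g(F)] \;=\; \sum_{j\ge 1}\E[\Gamma_j(F)]\,\E[g^{(j)}(F)],
\end{equation*}
and you justify the convergence by asserting that ``the sum terminates since the $\Gamma_j(F)$'s are polynomials in the coordinate Gaussians.'' This is not so: the $\Gamma_j(F)$ never vanish (for $F$ in the second chaos, $\Gamma_j(F)$ is again a nondegenerate second-chaos random variable for every $j$). What actually terminates the series in the favorable case is that $g^{(j)}\equiv 0$ once $g$ is a polynomial. So your expansion is only valid for polynomial test functions, whereas Theorem~\ref{thm:SMC} demands the conclusion $\E[\mathcal{A}_\infty f(F)]=0$ for the whole class of $f$ with $\E\big[|\mathcal{A}_\infty f(F)|\big]<\infty$ and $\E[f^{(q)}(F)^2]<\infty$. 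For such $f$ the iterated integration by parts produces, after a fixed finite number of steps, a remainder of the form $\E[f^{(q)}(F)\,\Gamma_{l-2}(F)]$ that you would have to control. The paper does exactly this: it stops the iteration at order $q$ (equation~\eqref{eq:com1}), collects the remainder into the single term $-\E\big[f^{(q)}(F)\sum_{r=1}^{q+1}a_r(\Gamma_{r-1}(F)-\E[\Gamma_{r-1}(F)])\big]$, then uses Cauchy--Schwarz together with $\Delta(F_\infty,F_\infty)=0$ (Lemma 3.1 of \cite{a-p-p}, which requires first arguing that $F$ lies in the second chaos) to kill it. Your proposed ``cleaner alternative'' of invoking Corollary~\ref{benjarras2} does not repair this, since that corollary is only stated for $\phi\in S(\R)$, a strictly smaller class than the one appearing in Theorem~\ref{thm:SMC}.
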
 
\begin{proof}
  Repeatedly using the Malliavin integration by parts formulae
  \cite[Theorem 2.9.1]{n-pe-1}, we obtain for any $2 \le l \le q+2$
  that
\begin{align}
  \E\left[F f^{(q-l+2)}(F) \right]
  =   \E\left[ f^{(q)}(F)
  \Gamma_{l-2}(F) \right] +  \sum_{r=q-l+3}^{q-1} \E\left[
  f^{(r)}(F) \right] \E\left[ \Gamma_{r+l-q-2}(F) \right].\label{eq:com1}
\end{align}
For indices $l=2,3$, the second term in the right hand side of
$(\ref{eq:com1})$ is understood to be $0$. Summing from $l=2$ up to
$l=q+2$, we obtain that

\begin{equation}\label{com2}
\begin{split}
\sum_{l=2}^{q+2} a_{l-1} \, \E\left[F f^{(q-l+2)}(F) \right]  & = \sum_{l=2}^{q+2} a_{l-1}  \, \E\left[ f^{(q)}(F) \Gamma_{l-2}(F) \right] \\
& \hskip1cm +  \sum_{l=4}^{q+2} a_{l-1}  \, \sum_{r=q-l+3}^{q-1} \E\left[ f^{(r)}(F) \right] \E\left[ \Gamma_{r+l-q-2}(F) \right]\\
& =  \sum_{l=1}^{q+1} a_{l}  \, \E\left[ f^{(q)}(F) \Gamma_{l-1}(F) \right] \\
& \hskip1cm +  \sum_{l=3}^{q+1} a_{l}  \, \sum_{r=q-l+2}^{q-2} \E\left[ f^{(r)}(F) \right] \E\left[ \Gamma_{r+l-q-1}(F) \right]\\
& = \sum_{l=1}^{q+1} a_{l}  \, \E\left[ f^{(q)}(F) \Gamma_{l-1}(F) \right] \\
& \hskip1cm +  \sum_{l=2}^{q+1} a_{l}  \, \sum_{r=1}^{l-2} \E \left[ f^{(q-r)}(F) \right] \E \left[ \Gamma_{l-r-1}(F) \right].
\end{split}
\end{equation}
On the other hand, 
\begin{equation}\label{eq:com4}
\begin{split}
\sum_{l=2}^{q+1} b_l \, \E \left[ f^{(q+2-l)}(F) \right] &=
\sum_{l=0}^{q-1} b_{l+2}  \E \left[ f^{(q-l)} (F) \right]\\
&=   \sum_{l=0}^{q-1}  \left[ \sum_{r=l+2}^{q+1} a_r \E (
  \Gamma_{r-l-1}(F_\infty) ) \right] \E \left[ f^{(q-l)}(F) \right]\\ 
&= \sum_{r=2}^{q+1} a_r \sum_{l=0}^{r-2} \E \left[
  \Gamma_{r-l-1}(F_\infty)  \right] \times \E \left[ f^{(q-l)}(F)
\right]. 
\end{split}
\end{equation}
Wrapping up, we finally  arrive at
\begin{equation}\label{eq:com5}
\begin{split}
\E \left[ \mathcal{A}_\infty f (F) \right] & = - \E \Bigg[  f^{(q)}(F) \times \Big( \sum_{r=1}^{q+1} a_r \left[ \Gamma_{r-1}(F) - \E[\Gamma_{r-1}(F)] \right] \Big) \Bigg] \\
& \hskip 1cm +  \sum_{r=2}^{q+1} a_r \sum_{l=0}^{r-2}  \left\{ \E [
  f^{(q-l)}(F) ] \times \Big(  \E \left[ \Gamma_{r-l-1}(F_\infty)
  \right] - \E \left[ \Gamma_{r-l-1}(F) \right] \Big) \right\}\\ 
& = - \E \Bigg[  f^{(q)}(F) \times \Big( \sum_{r=1}^{q+1} a_r \left[
  \Gamma_{r-1}(F) - \E[\Gamma_{r-1}(F)] \right] \Big) \Bigg] \\ 
&\hskip 1cm +  \sum_{r=2}^{q+1} a_r \sum_{l=0}^{r-2} \frac{ \E [ f^{(q-l)}(F) ]}{(r-l-1)!} \times \Big( \kappa_{r-l}(F_\infty) - \kappa_{r-l}(F) \Big).
\end{split}
\end{equation}
We are now in a position to prove the claim.  First we assume that
$F \stackrel{\text{law}}{=} F_\infty$. Then obviously
$\kappa_{r}(F)=\kappa_{r}(F_\infty)$ for $r=2,\cdots,2q+2$. Following
the same arguments as in the proof of \cite[Proposition 3.2]{a-p-p},
one can infer that, in fact, $F$ belongs to the second Wiener
chaos. Hence, according to \cite[Lemma 3.1]{a-p-p}, and the
Cauchy--Schwarz inequality, we obtain that
\begin{equation*}
\begin{split}
\vert \E \left[ \mathcal{A}_\infty f (F) \right] \vert & \le \sqrt{\E \left[f^{(q)}(F) \right]^2} \times 
\sqrt{ \E \Big[ \sum_{r=1}^{q+1} a_r \left( \Gamma_{r-1}(F) -
    \E[\Gamma_{r-1}(F)] \right) \Big]^2} \\
& =  \sqrt{\E \left[f^{(q)}(F) \right]^2} \times \sqrt{\Delta(F,F_\infty)}\\
&=  \sqrt{\E \left[f^{(q)}(F) \right]^2} \times \sqrt{\Delta(F_\infty,F_\infty)}= 0.
\end{split}
\end{equation*}
Conversely, assume that $\E \left[ \mathcal{A}_\infty f (F) \right] =0$ for all the suitable functions $f$. Then relation $(\ref{eq:com5})$ implies that, by choosing appropriate 
polynomials for function $f$, we have $\kappa_r(F)=\kappa_r(F_\infty)$ for $r=2,\cdots,q+1$. Now, combining this observation together with relation $(\ref{eq:com5})$, we infer that 
$$\E \left[ \sum_{r=1}^{q+1} a_r \Big( \Gamma_{r-1}(F) - \E[\Gamma_{r-1}(F)] \Big) \Big \vert F \right]=0.$$
Using e.g. integrations by parts, the latter equation can be turned into a linear recurrent relation between the cumulants of $F$ of order up to $q+1$. Combining this with the knowledge of the 
$q+1$ first cumulants characterize all the cumulants of $F$ and hence the distribution $F$. Indeed, all the distributions in the second Wiener chaos are determined by their moments.
\end{proof}

 \begin{ex}{ \rm Consider the special case of only two non-zero distinct eigenvalues $\lambda_1$ and $\lambda_2$, i.e.
\begin{equation}\label{eq:target}
F_\infty=\lambda_1(N^2_1 -1) + \lambda_2 (N^2_2 -1)
\end{equation}
where $N_1, N_2 \sim \mathscr N (0,1)$ are independent. In this case, the polynomial $P$ takes the form $P(x)=x (x - \alpha)(x - \beta)$. Simple 
calculations reveal that $P'(0)=\lambda_1 \lambda_2, P''(0)= -2 (\lambda_1 + \lambda_2)$, and $P^{(3)}(0)=3!$. Also, $\kappa_2(F_\infty)=\E [\Gamma_{1}(F_\infty)]=2(\lambda_1^2 + \lambda_2^2)$, and 
$\kappa_3(F_\infty)= 2 \E [\Gamma_{2}(F_\infty)]= 4 (\lambda_1^3 + \lambda_2^3)$. Then, the Stein equation $(\ref{eq:SME})$ reduces to that  
\begin{equation}\label{eq:stein-equation-2}
 \mathcal{A}_\infty f (x) = -4 (\lambda_1 \lambda_2 x + (\lambda_1 + \lambda_2) \lambda_1 \lambda_2) f''(x) + 2 \left(\lambda_1^2 + \lambda_2^2+ (\lambda_1 + \lambda_2) x \right) f'(x) - x f(x) 
 \end{equation}
 We also remark that when $\lambda_1=-\lambda_2=\frac12$, and hence
 $F_\infty \stackrel{\text{law}}{=} N_1 \times N_2$, the Stein's
 equation $(\ref{eq:stein-equation-2})$ coincides with that in
 \cite[equation (1.9)]{g-2normal}.}
 \end{ex}

 One can show that \eqref{eq:SME} is a specification of \eqref{eq:3}
 in the particular setting considered in the current Section. The
 proof of this asserstion is quite technical and we do not include it.
 To convince the reader, let us inspect the particular case where
 $d=2$, $q=2$ and $\lambda_1\ne\lambda_2$. Using the previous
 corollary, the differential operator, $T$, boils down to, (on smooth
 test function $\phi$):
\begin{align*}
T(\phi)(x)=4\lambda_1\lambda_2(x+\lambda_1+\lambda_2)\phi^{(2)}(x)-2\big[x(\lambda_1+\lambda_2)+\lambda_1^2+\lambda_2^2\big]\phi^{(1)}(x)+x\phi(x),
\end{align*} 
which is, up to a minus sign factor, the differential operator
appearing in Equation \ref{eq:stein-equation-2} with
$\alpha=\lambda_1$ and $\beta=\lambda_2$. In the case of $d$ different
eigenvalues whose respective multiplicities are equal to $1$, in order
to switch from one operator to the other one, we must use the
fundamental Newton-Girard identities linking the elementary symmetric
polynomials valued at $(\lambda_1,...,\lambda_d)$ together with the
power sums valued at $(\lambda_1,...,\lambda_d)$ (which reduce, up to
a multiplicative factor, to the cumulants of $F$).

\subsection{Cattywampus Stein's method}
\label{sec:cattyw-steins-meth}

In this section, we first propose a heuristic to clarify why
$\Delta(F, F_{\infty})$ can be interpreted as a version of the
Malliavin-Stein discrepancy, when $F$ belongs to the second Wiener
chaos. Let $\mathcal{H}$ be a family of test functions that at least
characterizes the convergence in distribution, in the sense that
$$\ud_{\mathcal{H}} (F,F_\infty): = \sup_{h \in \mathcal{H}} \Big \vert   \E [h(F)] - \E[h(F_\infty)]  \Big \vert \approx 0 \quad   \text{if and only if} \quad F \stackrel{\text{law}}{\approx} F_\infty.$$
Let $h \in \mathcal{H}$ and take $\mathcal{A}_\infty$ as in
$(\ref{eq:SME})$. Consider the  Stein
equation
\begin{equation}\label{eq:NHSE}
\mathcal{A}_{\infty}f (x) = h (x) - \E[h(F_{\infty})] \qquad h \in \mathcal{H}.
\end{equation}

\begin{assu}{\bf [Stein universality assumption]} \label{assu:1} For
  any $h \in \mathcal{H}$, equation $(\ref{eq:NHSE})$ has a unique bounded
  solution $f_h$ so that
  \begin{equation}
    \label{eq:4}
\Vert f^{(r)}_h \Vert_\infty < \infty \qquad \forall \, r=1,\cdots,q
  \end{equation}
uniformly in $h$.
\end{assu}

Now, we take a general centered random variable $F$, smooth in the
sense of Malliavin differentiability, for example $F$ belongs to a
finite sum of Wiener chaoses. Then, under Assumption $\ref{assu:1}$,
the proof of Theorem \ref{thm:SMC} reveals that for some general
constants $C_1,C_2$, using the Cauchy-Swartz inequality, one has

\begin{equation}\label{eq:8}
\begin{split}
\sup_{h \in \mathcal{H}} \Big \vert \E [h(F)] - \E[h(F_\infty)] \Big \vert & \le  C_1 \E \Big \vert \sum_{r=1}^{q+1} a_r \left( \Gamma_{r-1}(F) - \E[\Gamma_{r-1}(F)] \right) \Big\vert \\
& \hskip 1cm + C_2 \sum_{r=2}^{q+1} \big \vert \kappa_r(F) - \kappa_r(F_\infty) \big \vert  \\
& \le \sqrt{ \E \Big[ \sum_{r=1}^{q+1} a_r \left( \Gamma_{r-1}(F) - \E[\Gamma_{r-1}(F)] \right) \Big]^2 } \\
 & \hskip 1cm + C_2 \sum_{r=2}^{q+1} \big \vert \kappa_r(F) - \kappa_r(F_\infty) \big \vert.
\end{split}
\end{equation}
However, taking into account \cite[Lemma 3.1]{a-p-p} when $F$ itself
belongs to the second Wiener chaos, then  
\begin{equation}\label{eq:11}
\begin{split}
\sup_{h \in \mathcal{H}} \Big \vert \E [h(F)] - \E[h(F_\infty)] \Big
\vert  \le C_1 \sqrt{\Delta(F,F_\infty)} + C_2 \sum_{r=2}^{q+1} \big
\vert \kappa_r(F) - \kappa_r(F_\infty) \big \vert. 
\end{split}
\end{equation}
Starting from \eqref{eq:11}, to apply Stein's method in the second
Wiener chaos it suffices, in a sense, to provide the estimates
\eqref{eq:4} required by Assumption~\ref{assu:1}. 
\begin{rem}
  This idea is at the heart e.g.~of \cite{thale} where the authors
  could make use of the estimates provided by \cite{g-variance-gamma}
  to apply the above plan to targets as in Example~\ref{ex:eicheltha}
  (and, more generally, to variance-gamma distribution).
\end{rem}


There are two major flaws to this approach and, therefore, to the
classical take on Stein's method as adapted to the second Wiener
chaos. First the bounds required in \eqref{eq:4} can only be obtained
by solving $q$-th degree inhomogeneous equations and it is unlikely
that a unified approach will allow to deal with all targets of the
form \eqref{target-wiener1} in one sweep. Indeed different ranges of
$\alpha_i$'s imply very different properties for the corresponding
$F_{\infty}$ and hence each application of Stein's method to these
targets will necessitate ad-hoc target specific solutions, which in
the current state of knowledge we do not even know to be
bounded. Second, the bounds on higher order derivatives of $f_h$ will
necessarily depend on smoothness assumptions on the test functions
$h$; hence several important distances of the form \eqref{eq:6} with
non-smooth $h$ cannot be tackled via Stein's method.

\begin{rem}
  Such a flaw was already noted for multivariate Gaussian
  approximation, see \cite{CM}, where estimates in total variation
  distance were realized to be beyond the scope of the classical
  approaches to the method.  Recently an original solution was
  proposed in \cite{NPS} wherein a class of random vectors $F$ was
  identified to which one could apply what we will coin an
  \emph{information theoretic approach to Stein's method}; this
  resulted in a general fourth moment bound on Total Variation
  distance for multivariate normal approximation for random vectors
  $F$ satisfying a very particular integrability constraint (see
  \cite[Condition (2.53)]{NPS}).
\end{rem}



\section{Bypassing the Stein's method}\label{sec:preliminaries}

As mentioned in the conclusion to the previous section, bounding the
solutions of higher order Stein's equations is an extremely hard
task. However, if the appoximating sequence has the shape of a
weighted sum of i.i.d. random variables, we provide a new strategy to
bound the two Wasserstein distance between the sequence and the
target. Not only we completely bypass the major difficulty of bounding
the stein's solution but we can deal with distances which cannot be
reached by usual Stein's tools.

\begin{definition}
Fix $p\geq 1$. Given two probability measures $\nu$ and $\mu$ on the Borel sets of $\R^d$ whose marginals have finite absolute moments
of order $p$, define the {\it Wasserstein distance} (of order $p$) between $\nu$ and $\mu$ as the quantity
$$
{{\bf \rm W}}_p(\nu,\mu) \, = \, \inf_\pi 
\bigg ( \int_{\R^d\times\R^d} |x-y|^p d\pi(x,y)\bigg ) ^{1/p}
$$
where the infimum runs over all probability measures $\pi$ on
$\R^d\times \R^d$ with marginals $\nu$ and $\mu$.   
\end{definition}
\begin{rem}
The Wasserstein
metric may be equivalently defined by
$${{\bf \rm W}}_p(\nu,\mu) \, = \, \big( \inf \E \Vert X-Y\Vert_{d}^p
\big)^{1/p}$$
where the infimum is taken over all joint distributions of the random
variables $X$ and $Y$ with marginals $\mu$ and $\nu$ respectively, and
$\Vert \, \Vert_d$ stands for the Euclidean norm on $\R^d$. It is also
well-known that convergence with respect to ${\bf \rm W}_p$ is
equivalent to the usual weak convergence of measures plus convergence
of the first $p$th moments. Also, a direct application of H\"older
inequality implies that if $1\le p \le q$ then
${\bf \rm W}_p \le {\bf \rm W}_q$. Relevant information about
Wasserstein distances can be found, e.g. in \cite{villani-book}.  
\end{rem}

\subsection{Main result}\label{s:main-result}
Throughout this section, we assume that $\{ W_k\}_{k \ge 1}$ is a
general sequence of independent and identically distributed random
variables having {\bf finite moments} of any order such that
$\E[W_1]=0$ and $\E[W^2_1]=1$, and moreover $\kappa_r(W_1) \neq 0$ for
all $r=2,\cdots,q+1$. For a given sequence
$\{\alpha_{n,k}\}_{n,k\ge 1} \subset \R$, we assume that each element
$F_n$ of the approximating sequence is of the form

\begin{equation}\label{eq:app-general-form}
F_n= \sum_{k\ge1} \alpha_{n,k} \, W_k \qquad n\in \N.
\end{equation}
Similarly, we assume that the target random variable may be written in the following way
\begin{equation}\label{target-wiener}
  F_{\infty}:= \sum_{k=1}^q \alpha_{\infty,k} W_k.
\end{equation}
where $q \ge 2$ and all the coefficients $\{ \alpha_{\infty,k}\}_{1\le k \le q} \subset \R$ are both {non-zero} and {pairwise distinct} real numbers. Also, without loss of generality, we assume that we are dealing with normalized random variables, meaning that

\begin{equation}\label{eq:2moment-condition}
  \E[F^2_{n}]=\sum_{k\ge1} \alpha^2_{n,k}=1 \quad \text{and} \quad
  \E[F^2_\infty]
  =\sum_{k=1}^{q} \alpha^2_{\infty,k}=1, \quad \forall \, n \in \N.
\end{equation}
For any $n \in \N$, we introduce, in the shape of a lemma, a crucial quantity that can be also written as a finite linear combination of the first $2q+2$ cumulants of the random variable $F_n$ of the approximating sequence.
\begin{lem}\label{lem:linear-combiniation}
For any $n \in \N$ we have
\begin{equation} \label{eq:Delta-general}
\begin{split}
\Delta(F_n,F_\infty)=\Delta(F_n):&= \sum_{k\ge1} \alpha^2_{n,k} \prod_{r=1}^{q} \left( \alpha_{n,k} - \alpha_{\infty,r} \right)^2\\
&=\sum_{r=2}^{2q+2} \Theta_r \sum_{k\ge1} \alpha^r_{n,k}\\
&=\sum_{r=2}^{2q+2} \frac{\Theta_r}{\kappa_r(W_1)} \kappa_r(F_n).
\end{split}
\end{equation}
where the coefficients $\Theta_r$ are the coefficients of the polynomial
\begin{equation}\label{eq:polynomial}
Q(x)=(P(x))^2= (x \prod_{i=1}^q (x-\alpha_{\infty,i}))^2.
\end{equation}
\end{lem}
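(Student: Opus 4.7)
The statement amounts to two algebraic identities flowing from the definition $\Delta(F_n) := \sum_{k\geq 1} \alpha_{n,k}^2 \prod_{r=1}^q (\alpha_{n,k} - \alpha_{\infty,r})^2$ introduced in the first line of (\ref{eq:Delta-general}). The plan is to recognize each summand as the value at $\alpha_{n,k}$ of the polynomial $Q$, expand $Q$ in a Taylor series at the origin to obtain the second form, and then replace the power sums $\sum_k \alpha_{n,k}^r$ by cumulants of $F_n$ to reach the third form.

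For the middle equality, I would first observe that $\alpha_{n,k}^2 \prod_{r=1}^q (\alpha_{n,k} - \alpha_{\infty,r})^2 = Q(\alpha_{n,k})$ by the very definition of $Q$ in (\ref{eq:polynomial}). Since $P(0) = 0$, the polynomial $Q = P^2$ vanishes to order two at the origin, so that its Taylor expansion reads $Q(x) = \sum_{r=2}^{2q+2} \Theta_r x^r$ with $\Theta_0 = \Theta_1 = 0$. Substituting this inside $\sum_k Q(\alpha_{n,k})$ and exchanging the two summations---legitimate because the normalization $\sum_k \alpha_{n,k}^2 = 1$ forces $|\alpha_{n,k}| \leq 1$, hence $\sum_k |\alpha_{n,k}|^r \leq 1$ for every $r \geq 2$---yields the middle equality.

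For the last equality I would invoke the standard behaviour of cumulants under scaling and under sums of independent random variables: homogeneity $\kappa_r(cX) = c^r \kappa_r(X)$ combined with additivity across independent summands gives $\kappa_r(F_n) = \sum_k \alpha_{n,k}^r \, \kappa_r(W_1)$, so that $\sum_k \alpha_{n,k}^r = \kappa_r(F_n)/\kappa_r(W_1)$ under the non-degeneracy hypothesis on $\kappa_r(W_1)$. Substituting back into the middle expression produces the third form. The only genuine technicality is justifying this cumulant identity for the possibly infinite series $F_n = \sum_k \alpha_{n,k} W_k$: since $W_1$ has moments of every order, $|\alpha_{n,k}| \leq 1$, and $\sum_k \alpha_{n,k}^2 = 1$, a standard truncation-and-limit argument ensures that all relevant sums of $\alpha_{n,k}^r$ converge absolutely and that the truncated cumulants converge to $\kappa_r(F_n)$. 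Beyond this measure-theoretic point, the proof is an exercise in elementary algebra with no real obstacle.
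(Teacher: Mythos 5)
Your proof is correct, and since the paper states Lemma~\ref{lem:linear-combiniation} without any proof at all (it is introduced as a definition "in the shape of a lemma"), there is no paper argument to compare against — your write-up simply supplies the natural verification that the authors left implicit. The three observations you use are exactly the right ones: (i) each summand is $Q(\alpha_{n,k})$ by definition of $Q$; (ii) $Q=P^2$ vanishes to order two at the origin because $P(0)=0$, so its Taylor expansion starts at $r=2$ and ends at $r=2q+2$; and (iii) homogeneity and independence-additivity of cumulants give $\kappa_r(F_n)=\kappa_r(W_1)\sum_k\alpha_{n,k}^r$. Your justification of the interchange of sums, namely $|\alpha_{n,k}|\le 1$ and hence $\sum_k|\alpha_{n,k}|^r\le\sum_k\alpha_{n,k}^2=1$ for $r\ge2$, is adequate. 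The only place you could tighten the wording is the "truncation-and-limit" remark for the infinite sum: to pass cumulant additivity to the infinite series one should note that, since $W_1$ has all moments finite and $\sum_k\alpha_{n,k}^2<\infty$, Rosenthal's inequality gives convergence of the partial sums in $L^r$ for every $r\le 2q+2$, from which the convergence of the first $2q+2$ cumulants follows; but this is a standard point and your sketch is acceptable.
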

Now, we are ready to sate our main results.
\begin{thm}\label{thm:main-theorem1}
Let all notations and assumption in above are prevail. Then for some constant $C>0$ depending only on target random variable $F_\infty$ (and hence independent of $n$) so that 
\begin{equation}\label{eq:main-estimate-bis}
d_{{\bf \rm{W}}_2} (F_n,F_\infty) \le C \,\left(\sqrt{\Delta(F_n)}+\sum_{r=2}^{q+1}|\kappa_r(F_n)-\kappa_r(F_\infty)|\right) \qquad \forall n\ge1.
\end{equation}
More precisely, there exists a threshold $U_\infty>0$ and a constant $C>0$ depending only on the target random variable $F_\infty$ (and hence independent of $n$) such that the next bound
\begin{equation}\label{eq:upper-threshold}
\sqrt{\Delta(F_n)} +\sum_{r=2}^{q+1} \vert \kappa_r(F_n) - \kappa_r(F_\infty)\vert \le U_\infty,
\end{equation}
implies

\begin{equation}\label{eq:main-estimate}
d_{{\bf \rm{W}}_2} (F_n,F_\infty) \le C \,\sqrt{\Delta(F_n)}.
\end{equation}
In particular, if $\Delta(F_n) \to 0$ and moreover $\kappa_r(F_n) \to
\kappa_r(F_\infty)$ for all $r=2,\cdots,q+1$, then the threshold
requirement $(\ref{eq:upper-threshold})$ takes place and therefore the
sequence $F_n$ converges in distribution towards target random
variable $F_\infty$ at rate $\sqrt{\Delta(F_n)}$.  
\end{thm}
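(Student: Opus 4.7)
The plan is to exploit the i.i.d.\ representation $F_n=\sum_k\alpha_{n,k}W_k$ together with the clean polynomial identity $\Delta(F_n)=\sum_{k\ge 1}P(\alpha_{n,k})^2$ provided by Lemma \ref{lem:linear-combiniation}, where $P(x)=x\prod_{r=1}^q(x-\alpha_{\infty,r})$. A small value of $\Delta(F_n)$ thus forces each $\alpha_{n,k}$ (weighted by $|\alpha_{n,k}|$) to lie close to some root of $P$, i.e.\ near one of the $q+1$ points $\{0,\alpha_{\infty,1},\ldots,\alpha_{\infty,q}\}$. The key source of flexibility is that, by independence of the $W_k$'s, every injection $\sigma:\{1,\ldots,q\}\to\N$ yields a copy $\tilde F_\infty=\sum_{r=1}^{q}\alpha_{\infty,r}W_{\sigma(r)}$ of the target, hence a coupling we may optimize, and $d_{W_2}(F_n,F_\infty)^2\le\E[(F_n-\tilde F_\infty)^2]$.

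Concretely, first I would partition $\N$ into clusters $I_0^{(n)},\ldots,I_q^{(n)}$, where $I_r^{(n)}$ groups the indices $k$ whose $\alpha_{n,k}$ is closest to the $r$-th root of $P$ (with $\alpha_{\infty,0}:=0$), and from each $I_r^{(n)}$ with $r\ge 1$ pick a representative $k_r^{\star}$ minimizing $|\alpha_{n,k}-\alpha_{\infty,r}|$, setting $\sigma(r)=k_r^{\star}$. Orthogonality of the centered $W_k$'s gives
\[
\E[(F_n-\tilde F_\infty)^2]=\sum_{r=1}^{q}(\alpha_{n,k_r^{\star}}-\alpha_{\infty,r})^2+\sum_{k\in I_0^{(n)}}\alpha_{n,k}^2+\sum_{r=1}^{q}\sum_{k\in I_r^{(n)}\setminus\{k_r^{\star}\}}\alpha_{n,k}^2.
\]
The first sum and the $I_0^{(n)}$ contribution are each $\lesssim\Delta(F_n)$ through the local lower bounds $P(x)^2\gtrsim(x-\alpha_{\infty,r})^2$ valid near each $\alpha_{\infty,r}$ and $P(x)^2\gtrsim x^2$ valid near the origin, with constants depending only on $F_\infty$.

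The principal obstacle is controlling the third sum, the \emph{excess} mass inside each nonempty cluster with $r\ge 1$, since each such $\alpha_{n,k}$ is comparable to $|\alpha_{\infty,r}|$ and cannot be dismissed individually. I would handle this by invoking the cumulant identity $\kappa_p(F_n)=\kappa_p(W_1)\sum_k\alpha_{n,k}^p$ for $p=2,\ldots,q+1$. The closeness of each $\alpha_{n,k}$ to $\alpha_{\infty,r}$ on $I_r^{(n)}$, combined with the a priori bound $|I_r^{(n)}|\le 2/\alpha_{\infty,r}^2$ coming from the normalization $\sum\alpha_{n,k}^2=1$, yields $\sum_k\alpha_{n,k}^p=\sum_{r=1}^{q}|I_r^{(n)}|\,\alpha_{\infty,r}^p+O(\sqrt{\Delta(F_n)})$. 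This produces a $q\times q$ linear system in the unknown cluster sizes $|I_r^{(n)}|$ whose matrix is a scaled Vandermonde in the distinct nonzero $\alpha_{\infty,r}$, hence invertible with inverse norm depending only on $F_\infty$. Inverting it bounds each $|I_r^{(n)}|-1$ by a linear combination of the cumulant differences $|\kappa_p(F_n)-\kappa_p(F_\infty)|$ plus an $O(\sqrt{\Delta(F_n)})$ remainder, and summing $(|I_r^{(n)}|-1)\alpha_{\infty,r}^2$ delivers the universal bound \eqref{eq:main-estimate-bis}.

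For the threshold refinement, I would use that each $|I_r^{(n)}|$ is a nonnegative integer: the previous estimate forces $|I_r^{(n)}|=1$ for every $r\ge 1$ as soon as the left hand side of \eqref{eq:upper-threshold} lies below a constant $U_\infty$ depending only on $F_\infty$. Once all cluster sizes are pinned to one, the same identity read in reverse implies that each $|\kappa_p(F_n)-\kappa_p(F_\infty)|$ is itself of order $\sqrt{\Delta(F_n)}$, so the cumulant term in \eqref{eq:main-estimate-bis} gets absorbed into $\sqrt{\Delta(F_n)}$ and \eqref{eq:main-estimate} follows. I expect the most delicate aspect of the whole argument to be the Vandermonde inversion step together with keeping every approximation error uniformly bounded by $\sqrt{\Delta(F_n)}$ with constants depending only on the fixed target $F_\infty$.
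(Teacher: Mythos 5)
Your proposal is essentially correct and follows the same global strategy as the paper, but reaches it through a noticeably cleaner construction. Both arguments rest on two identical pillars: (i) the identity $\Delta(F_n)=\sum_{k}P(\alpha_{n,k})^2$ forces the $\alpha_{n,k}$'s to cluster near the roots $\{0,\alpha_{\infty,1},\ldots,\alpha_{\infty,q}\}$ of $P$ with an explicit $L^2$ control on the distortion, (ii) the cluster sizes (your $|I_r^{(n)}|$, the paper's multiplicities $\nu(n,k)$) are pinned to $1$ via the Vandermonde system in the cumulants $\kappa_2,\ldots,\kappa_{q+1}$ combined with integrality, which is exactly the paper's Step 4 and also the source of the threshold $U_\infty$. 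Where you differ is in how the clusters are identified. The paper first re-orders the coefficients $\alpha_{n,k}$ by decreasing magnitude, then runs an inductive procedure on the index $p$ using the truncated quantities $\Delta_p(F_n)$, a lower bound $(\ref{eq:general-lower-estimate})$, and the auxiliary number-theoretic quantities $\vartheta,\varkappa,\varrho$ defined in $(\ref{eq:vartheta})$--$(\ref{eq:varkappa})$, all in order to manufacture an $\ell_n$ with $\sum_{k\le\ell_n}\alpha_\infty^2(n,k)=1$ exactly. Your Voronoi partition bypasses the re-ordering, the induction, and the $\vartheta/\varkappa$ gymnastics entirely: the normalization $\sum_k\alpha_{n,k}^2=1$ alone gives the crude bound $|I_r^{(n)}|\le 4/\alpha_{\infty,r}^2$, and the $I_0^{(n)}$ tail is controlled directly by the local lower bound $P(x)^2\gtrsim x^2$ on the Voronoi cell of $0$ intersected with $[-1,1]$. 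This trades a delicate case analysis for a single explicit coupling $\tilde F_\infty=\sum_r\alpha_{\infty,r}W_{\sigma(r)}$, which I find more transparent; in return the paper's route makes the location of the threshold $U_\infty$ somewhat more explicit. Two small points you should tighten when writing this out: you must explicitly handle the case of an empty cluster $I_r^{(n)}$ (define $\sigma(r)$ as an unused index with $\alpha_{n,\sigma(r)}=0$, which contributes $\alpha_{\infty,r}^2$ to the coupling cost and corresponds to $|\,|I_r^{(n)}|-1\,|=1$, so the absolute value rather than $(|I_r^{(n)}|-1)$ is the correct quantity to bound); and the chain $d_{W_2}^2\lesssim\Delta+\sum_r|\,|I_r|-1\,|\alpha_{\infty,r}^2\lesssim\sqrt{\Delta}+\sum|\kappa_p(F_n)-\kappa_p(F_\infty)|$ only gives $(\ref{eq:main-estimate-bis})$ after you invoke integrality, since either every $|I_r^{(n)}|=1$ (and then $d_{W_2}\lesssim\sqrt{\Delta}$) or some $|\,|I_r^{(n)}|-1\,|\ge 1$ forces the right-hand side of $(\ref{eq:main-estimate-bis})$ to be at least a constant, at which point the trivial bound $d_{W_2}\le\sqrt{2}$ closes the argument.
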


\begin{thm}\label{thm:main-theorem2}
  Let all the notations and assumptions of Theorem
  \ref{thm:main-theorem1} prevail. Assume further
  that 
  $\dim_{\mathbb{Q}} \text{span} \left\{
    \alpha^2_{\infty,1},\cdots,\alpha^2_{\infty,q}\right \}=q$.
  Then there exists a constant $C>0$ depending only on the target
  random variable $F_\infty$ such that

\begin{equation}\label{eq:main-estimate2}
d_{{\bf \rm{W}}_2} (F_n,F_\infty) \le C \,\sqrt{\Delta(F_n)}.
\end{equation}
\end{thm}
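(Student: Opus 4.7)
The plan is to reduce Theorem \ref{thm:main-theorem2} to Theorem \ref{thm:main-theorem1} by showing that, under the $\mathbb{Q}$-linear independence assumption, the cumulant discrepancies are themselves controlled by the square root of $\Delta$:
\begin{equation*}
\sum_{r=2}^{q+1}|\kappa_r(F_n)-\kappa_r(F_\infty)| \leq C\sqrt{\Delta(F_n)}.
\end{equation*}
Substituting this into (\ref{eq:main-estimate-bis}) then absorbs the cumulant terms into the main one, yielding (\ref{eq:main-estimate2}). We may also assume that $\Delta(F_n) < \delta_0$ for a threshold $\delta_0 = \delta_0(F_\infty)$ to be chosen, since otherwise the $L^2$-normalizations (\ref{eq:2moment-condition}) yield the trivial bound $d_{W_2}(F_n,F_\infty) \leq \|F_n\|_{L^2} + \|F_\infty\|_{L^2} = 2 \leq (2/\sqrt{\delta_0})\sqrt{\Delta(F_n)}$.

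From Lemma \ref{lem:linear-combiniation}, we write $\Delta(F_n) = \int p(x) \, d\nu_n(x)$, where $\nu_n = \sum_k \alpha_{n,k}^2 \delta_{\alpha_{n,k}}$ is a probability measure on $\R$ and $p(x) = \prod_{r=1}^q (x-\alpha_{\infty,r})^2$ is a non-negative polynomial vanishing exactly on $\{\alpha_{\infty,1}, \ldots, \alpha_{\infty,q}\}$. Fix small disjoint open neighbourhoods $U_i$ of each $\alpha_{\infty,i}$, chosen so that $0 \notin \bigcup_i U_i$ and so that $|\alpha_{n,k}| \geq |\alpha_{\infty,i}|/2$ whenever $\alpha_{n,k} \in U_i$. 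Outside $K = \R \setminus \bigcup_i U_i$, $p$ is bounded below by a positive constant $c = c(F_\infty)$; on $U_i$, $p(x) \geq c'(x-\alpha_{\infty,i})^2$ for some $c'=c'(F_\infty)>0$. Partitioning indices as $A_{n,i} = \{k : \alpha_{n,k} \in U_i\}$ and $B_n = \{k : \alpha_{n,k} \in K\}$, elementary Markov-type bounds yield
\begin{equation*}
\sum_{k \in B_n}\alpha_{n,k}^2 \leq \Delta(F_n)/c, \qquad |\alpha_{n,k}-\alpha_{\infty,i}| \lesssim \sqrt{\Delta(F_n)} \ \text{ for every } k \in A_{n,i},
\end{equation*}
and since $\alpha_{n,k}^2 \geq \alpha_{\infty,i}^2/4$ on $A_{n,i}$ while $\sum_k \alpha_{n,k}^2 = 1$, the cardinalities $N_i^{(n)} = |A_{n,i}|$ are bounded above by a constant $N^\star = N^\star(F_\infty)$.

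The decisive step is a Diophantine rigidity argument. Combining $\sum_k \alpha_{n,k}^2 = 1$ with the atomwise estimate $\alpha_{n,k}^2 = \alpha_{\infty,i}^2 + O(\sqrt{\Delta(F_n)})$ on $A_{n,i}$ and $\sum_{k \in B_n}\alpha_{n,k}^2 = O(\Delta(F_n))$, one derives
\begin{equation*}
\sum_{i=1}^q (N_i^{(n)} - 1)\alpha_{\infty,i}^2 = O(\sqrt{\Delta(F_n)}).
\end{equation*}
The $\mathbb{Q}$-linear independence of the $\alpha_{\infty,i}^2$ forces the finite set $\mathcal{S} = \{\sum_{i=1}^q m_i \alpha_{\infty,i}^2 : m_i \in \mathbb{Z},\ |m_i| \leq N^\star\}$ to have the origin as an isolated point; let $g = g(F_\infty) > 0$ be the minimum modulus of its non-zero elements. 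Choosing $\delta_0$ so that the right-hand side above is $< g$ then forces $N_i^{(n)} = 1$ for every $i$. Denoting the unique index in $A_{n,i}$ by $k_i$ and using $|\alpha_{n,k}|\leq 1$ (which follows from $\sum_k\alpha_{n,k}^2=1$), the mean value theorem gives $|\alpha_{n,k_i}^r - \alpha_{\infty,i}^r| \lesssim \sqrt{\Delta(F_n)}$ and $\sum_{k \in B_n}|\alpha_{n,k}|^r \leq \sum_{k \in B_n}\alpha_{n,k}^2 = O(\Delta(F_n))$ for any $r \geq 2$. Summing, $|\kappa_r(F_n) - \kappa_r(F_\infty)| \lesssim \sqrt{\Delta(F_n)}$ for $r = 2, \ldots, q+1$, which completes the reduction to Theorem \ref{thm:main-theorem1}.

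The main obstacle is precisely this Diophantine step, in which the approximate normalization $\sum_i N_i^{(n)}\alpha_{\infty,i}^2 \approx 1 = \sum_i \alpha_{\infty,i}^2$, with the $N_i^{(n)}$ constrained to be bounded non-negative integers, must be upgraded to the exact combinatorial equality $N_i^{(n)} = 1$. The $\mathbb{Q}$-linear independence hypothesis is essential here: without it, the normalization equation admits additional non-negative integer solutions $(N_1,\ldots,N_q) \neq (1,\ldots,1)$, and one can build sequences with $\Delta(F_n) \to 0$ whose cumulants do not converge to those of $F_\infty$, falsifying the desired uniform bound.
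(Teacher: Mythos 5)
Your proof is correct. The core insight is the same as the paper's (the $\mathbb{Q}$-independence forces the approximate normalization $\sum_i N_i^{(n)}\alpha_{\infty,i}^2 \approx 1$ to be an exact equality with all multiplicities equal to one), but the route you take to set this up and to close the argument differs from the paper's in two respects that are worth noting.

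First, you organize the coefficients by partitioning $\R$ into fixed neighbourhoods $U_i$ of the $\alpha_{\infty,i}$ plus a complement $K$, and you extract the two bounds $\sum_{k\in B_n}\alpha_{n,k}^2 = O(\Delta)$ and $|\alpha_{n,k}-\alpha_{\infty,i}| = O(\sqrt{\Delta})$ for $k\in A_{n,i}$ from Markov-type estimates on the polynomial $p = \prod(x-\alpha_{\infty,r})^2$. The paper instead sorts the $\alpha_{n,k}$ by modulus, tracks partial tail quantities $\Delta_p(F_n)$, defines a set $\mathscr{A}_n$ of ``large'' indices with a maximal element $L_n$, and runs a case analysis on $1-\sum_{k\le L_n}\alpha^2_\infty(n,k)$ to locate the cut-off $\ell_n$. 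Your neighbourhood decomposition is more direct and, I think, more transparent; the price is that you must quote (or re-prove) the elementary comparison $p(x)\ge c'(x-\alpha_{\infty,i})^2$ on $U_i$, which plays the same role as the paper's Lemma \ref{lem:appendix}.

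Second, you close the argument by first showing $\sum_{r=2}^{q+1}|\kappa_r(F_n)-\kappa_r(F_\infty)| \lesssim \sqrt{\Delta(F_n)}$ and then invoking the unconditional bound \eqref{eq:main-estimate-bis} of Theorem \ref{thm:main-theorem1}, so that Theorem \ref{thm:main-theorem2} becomes a genuine corollary of Theorem \ref{thm:main-theorem1}. The paper closes differently: once it has $\ell_n=q$ and $\nu(n,k)=1$ it does not pass through cumulants at all, but implicitly uses the coupling $F_n = \sum_k \alpha_{\text{max}}(n,k)W_k$, $F_\infty = \sum_{k\le q}\alpha_\infty(n,k)W_k$ on the same probability space, for which $\E[(F_n-F_\infty)^2]\le C\Delta(F_n)$ follows directly from \eqref{eq:after-d-estimate2b}. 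Your modular reduction is logically sound (and makes the dependence between the two theorems explicit), but it does route through the weaker estimate \eqref{eq:main-estimate-bis} only to then absorb the cumulant term; the direct coupling is marginally tighter and avoids the need for the threshold $\delta_0$ handling you add at the start, though that handling is of course correct. Both the Diophantine gap argument and the final mean-value-theorem estimate on $\alpha^r_{n,k_i}-\alpha^r_{\infty,i}$ check out.
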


\begin{rem}\label{rem:span1}{ \rm
We remark that the condition $\dim_{\mathbb{Q}} \text{span} \left\{ \alpha^2_{\infty,1},\cdots,\alpha^2_{\infty,q}\right \}=q$ implies that if 
$\sum_{i=1}^{q+1} n_i  \alpha^2_{\infty,i}=1$ for some $n_i \in \N_0=\N \cup \{ 0\}$, then $n_1=\cdots=n_{q+1}=1$ according to normalization assumption $(\ref{eq:2moment-condition})$. We 
will use this useful observation in the proof of Theorem \ref{thm:main-theorem2}.
}
\end{rem}
\begin{rem}\label{rem:span3}{ \rm In light of
    Theorem~\ref{thm:main-theorem1}, it appears that if
    $\dim_{\mathbb{Q}} \text{span} \left\{
      \alpha^2_{\infty,1},\cdots,\alpha^2_{\infty,q}\right \}\neq q$
    then even a standard application of Stein's method (that is, using
    Stein equations and hypothetical bounds on the solutions) would
    require the control of more moments than only the first two in order to
    bound the Wasserstein-1 distance.  }
\end{rem}

\begin{rem}\label{rem:span2}{\rm
According to Theorem \ref{thm:main-theorem2} when $\dim_{\mathbb{Q}}
\text{span} \left\{
  \alpha^2_{\infty,1},\cdots,\alpha^2_{\infty,q}\right \}=q$, one can
drop the assumption of the separate convergences of the first $q+1$
cumulants, $\kappa_r(F_n) \to \kappa_r(F_\infty)$ for all
$r=2,\cdots,q+1$, in Theorem \ref{thm:main-theorem1} and hence the
only requirement $\Delta(F_n) \to 0$ implies the convergence in
distribution of the sequence $F_n$ towards the target random variable
$F_\infty$. For example, let $q=2$ and 
\begin{enumerate}
\item[(1)] $\alpha_{\infty,1} \in \mathbb{Q}$. Then, obviously $\alpha_{\infty,2} \in \mathbb{Q}$, and therefore $\dim_{\mathbb{Q}} \text{span}
\{ \alpha^2_{\infty,1},\alpha^2_{\infty,2} \}=1 \neq q=2$. Hence, for
convergence in distribution of the sequence $F_n$ towards the target
random variable $F_\infty$ in addition to convergence $\Delta(F_n) \to
0$ one needs also the convergence $\kappa_3(F_n) \to
\kappa_3(F_\infty)$, see also Remark \ref{rem:thale}.  
\item[(2)] $\alpha_{\infty,1}\in \R - \mathbb{Q}$ be a irrational number, and therefore according to normalization assumption $(\ref{eq:2moment-condition})$ the coefficient $\alpha_{\infty,2}$ will be also an irrational number. In this case, we have $\dim_{\mathbb{Q}} \text{span} \left\{ \alpha^2_{\infty,1},\alpha^2_{\infty,2}\right \}=q=2$. Hence, the sole requirement $\Delta(F_n) \to 0$ is enough for convergence in distribution of the sequence $F_n$ towards the target random variable $F_\infty$.
\end{enumerate}

}
\end{rem}

\subsection{Idea behind the  proof}
\label{sec:idea-behind-proof}

The main idea leading the proof relies on the following non trivial
observation: $F_n=\sum_k \alpha_{n,k} W_k$ converges in distribution
towards $F_\infty=\sum_{k=1}^q \alpha_{\infty,k} W_k$ if and only if
one can find $q$ coefficients among the sequence $\{\alpha_{n,k}\}_{k\ge 1}$
which are very close to the corresponding terms
$\{\alpha_{\infty,k}\}_{1\le k \le q}$ and if the $l^2$-norm of the
remaining terms is small. The main difficulty is to quantify this
phenomenon by only using the Stein discrepancy
$\Delta(F_n,F_\infty)$. To reach this goal, we proceed in several
steps which are sketched below:
\begin{itemize}
\item \text{Step 1}: Without loss of generality, we can assume that,
  for each $n$, the sequence $|\alpha_{n,k}|$ decreases with
  $k$. Indeed, this will be useful to identify which coefficients have a
  non-zero limit and which coefficients go to zero.
\item \text{Step 2}: Here we prove several inequalities which, roughly
  speaking, express the fact that if for some $k$ the coefficient
  $|\alpha_{n,k}|$ is bounded from below by a fixed constant, then it
  is necessarily close to one of the limiting coefficients
  $\alpha_{\infty,k}$.
\item \text{Step 3}: Here we prove that a certain number of the coefficients
  $\alpha_{n,k}$ (say $\alpha_{n,1},\cdots,\alpha_{n,l_n}$) are all
  close to one element (say $\alpha_\infty(n,k)$) of the set
  $\{\alpha_{\infty,1},\cdots,\alpha_{\infty,q}\}$ with possible
  repetitions. The difficult part consists in showing that
  $\sum_{k> l_n} \alpha_{n,k}^2$ is small. To do so, it is equivalent
  to prove that $\sum_{k=1}^{l_n} \alpha_{n,k}^2$ is close to
  one. Having in mind that
  $\sum_{k=1}^{l_n} |\alpha_{n,k}-\alpha_\infty(n,k)|$ is small, the
  latter claim is in turn equivalent to
  $\sum_{k=1}^{l_n}\alpha_\infty(n,k)^2=1$. We argue by contradiction
  using a maximality argument.
\item \text{Step 4}: As we said in step 3, there might be repetitions
  among the coefficients $\alpha_\infty(n,k)$ and they may also vary
  with $n$. However, if the coefficients
  $\{\alpha_{\infty,k}^2\}_{1\le k \le q}$ are rationally independent,
  then there is only one way to chose $\alpha_\infty(n,k)$ in the set
  $\{\alpha_{\infty,1},\cdots,\alpha_{\infty,q}\}$ such that
  $\sum_{k=1}^{l_n}\alpha_\infty^2(n,k)=1$. This is the idea behind
  Theorem \ref{thm:main-theorem2}. However, when we do not assume
  rational independence of the coefficients, we need to use the
  assumption of the convergence of the cumulants through a Vandermonde
  argument to proceed.
\end{itemize}

\subsection{Applications: second Wiener chaos}\label{s:applications}

In this section, we apply our main results in a desirable framework when the approximating sequence $F_n$ are elements of the second Wiener chaos of the isonormal process $\rm X=\{X(h); \ h \in \HH\}$ over a separable Hilbert space $\HH$.  We refer the reader to \cite{n-pe-1} Chapter 2 for a detailed discussion on this topic. Recall that the elements in the second Wiener chaos are random variables having the general form $F=I_2(f)$, with $f \in  \HH^{\odot 2}$. Notice that, if $f=h\otimes h$, where $h \in \HH$ is such that $\Vert h \Vert_{\HH}=1$, then using the multiplication formula one has $I_2(f)=\rm X (h)^2 -1  \stackrel{\text{law}}{=} N^2 -1$, where $N \sim \mathscr{N}(0,1)$. To any kernel $f \in \HH^{\odot 2}$, we associate the following \textit{Hilbert-Schmidt} operator
\begin{equation*}
A_f : \HH \mapsto \HH; \quad g \mapsto f\otimes_1 g. 
\end{equation*}
We also write $\{\alpha_{f,j}\}_{j \ge 1}$ and $\{e_{f,j}\}_{j \ge 1}$, respectively, to indicate the (not necessarily distinct) eigenvalues of $A_f$ and the corresponding eigenvectors. The next proposition gathers some relevant properties of the elements of the second Wiener chaos associated to $\rm X$.

\begin{prop}[See Section 2.7.4 in \cite{n-pe-1} and Lemma 3.1 in \cite{a-p-p} ] \label{second-property}
Let $F=I_{2}(f)$, $f \in \HH^{ \odot 2}$, be a generic element of the second Wiener chaos of $\rm X$, and write $\{\alpha_{f,k}\}_{k\geq 1}$ for the set of the eigenvalues of the 
associated Hilbert-Schmidt operator $A_f$.

\begin{enumerate}
 \item The following equality holds: $F=\sum_{k\ge 1} \alpha_{f,k} \big( N^2_k -1 \big)$, where $\{N_k\}_{k \ge 1}$ is a sequence of i.i.d. $\mathscr{N}(0,1)$ random variables that are elements of the isonormal process $\rm X$, and the series converges in $L^2$ and almost surely.
 \item For any $r\ge 2$,
 \begin{equation*}
  \kappa_r(F)= 2^{r-1}(r-1)! \sum_{k \ge 1} \alpha_{f,k}^r.
 \end{equation*}
\item For polynomial $Q$ as in $(\ref{eq:polynomial})$ we have $\Delta(F) = \sum_{k\geq 1} Q(\alpha_{f,k})$. In particular $\Delta(F_\infty)=0$.
\end{enumerate}
\end{prop}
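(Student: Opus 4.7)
The plan is to obtain items (1)--(3) as successive consequences of the spectral theorem for Hilbert--Schmidt operators, the independence structure of $\mathrm{X}$, and the definition of $\Delta$.

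First I would unpack the spectral decomposition of $A_f$. Since $f\in\HH^{\odot 2}$ is symmetric, $A_f$ is a self-adjoint Hilbert--Schmidt operator, so the spectral theorem yields $f=\sum_{k\ge 1}\alpha_{f,k}\, e_{f,k}\otimes e_{f,k}$ with convergence in $\HH^{\otimes 2}$. Linearity and $L^2$-continuity of the second multiple integral then give
\begin{equation*}
F \;=\; I_2(f) \;=\; \sum_{k\ge 1}\alpha_{f,k}\, I_2(e_{f,k}\otimes e_{f,k}).
\end{equation*}
For each normalized $e_{f,k}$, the multiplication formula (or equivalently the Hermite polynomial representation $I_2(e\otimes e)=H_2(\mathrm{X}(e))$) yields $I_2(e_{f,k}\otimes e_{f,k})=\mathrm{X}(e_{f,k})^2-1$. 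Setting $N_k:=\mathrm{X}(e_{f,k})$, the orthonormality of $\{e_{f,k}\}$ in $\HH$ together with the isonormal property of $\mathrm{X}$ makes $\{N_k\}_{k\ge 1}$ an i.i.d.\ $\mathscr{N}(0,1)$ sequence. This proves (1); the $L^2$-convergence is immediate from $\|f\|_{\HH^{\otimes 2}}^2=\sum_k\alpha_{f,k}^2<\infty$, and almost sure convergence follows from the independence of the summands and Kolmogorov's two-series criterion.

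For (2), I would use the additivity of cumulants under independence. Each summand $\alpha_{f,k}(N_k^2-1)$ is a scaled centered $\chi^2_1$ variable whose cumulant generating function
\begin{equation*}
\log\E\big[e^{t(N^2-1)}\big] \;=\; -t-\tfrac{1}{2}\log(1-2t)
\end{equation*}
expands as $\sum_{r\ge 2}\frac{(2t)^r}{2r}=\sum_{r\ge 2}\frac{2^{r-1}(r-1)!}{r!}t^r$, so $\kappa_r(N^2-1)=2^{r-1}(r-1)!$. Consequently $\kappa_r(\alpha_{f,k}(N_k^2-1))=2^{r-1}(r-1)!\,\alpha_{f,k}^r$, and summing over the independent $k$'s yields the stated formula. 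A small extra argument (dominated convergence applied to the cumulant generating function truncated at level $K$, valid for $|t|$ small thanks to $\sum_k\alpha_{f,k}^2<\infty$) justifies swapping the sum and the cumulant.

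Finally, (3) will fall out directly from the definition \eqref{eq:Delta}. Substituting the formula from (2) gives
\begin{equation*}
\Delta(F,F_\infty) \;=\; \sum_{r=2}^{\deg Q}\frac{Q^{(r)}(0)}{r!}\sum_{k\ge 1}\alpha_{f,k}^{r} \;=\; \sum_{k\ge 1}\sum_{r=2}^{\deg Q}\frac{Q^{(r)}(0)}{r!}\alpha_{f,k}^{r}.
\end{equation*}
The key observation is that $Q=P^2$ with $P(x)=x\prod_{i=1}^q(x-\alpha_{\infty,i})$ has a double root at $0$, hence $Q(0)=Q'(0)=0$, so the Taylor series of $Q$ coincides with $\sum_{r=2}^{\deg Q}\frac{Q^{(r)}(0)}{r!}x^{r}$. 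This gives $\Delta(F)=\sum_{k}Q(\alpha_{f,k})$. The special case $\Delta(F_\infty)=0$ then follows since the only non-zero eigenvalues of $A_{f_\infty}$ are $\alpha_{\infty,1},\dots,\alpha_{\infty,q}$, which are all roots of $P$ (and $Q(0)=0$). The main mild subtlety I anticipate is ensuring the interchange of summation is legitimate; this is not serious since $Q$ is a polynomial of fixed finite degree and $\sum_k|\alpha_{f,k}|^r<\infty$ for all $r\ge 2$ because $(\alpha_{f,k})_k\in\ell^2$ and hence lies in every $\ell^r$ with $r\ge 2$.
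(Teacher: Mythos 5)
The paper does not provide its own proof of this proposition; it simply cites Section~2.7.4 of \cite{n-pe-1} and Lemma~3.1 of \cite{a-p-p}. Your reconstruction is correct and follows exactly the standard line of argument one finds in those references: spectral decomposition of the self-adjoint Hilbert--Schmidt operator $A_f$ together with $I_2(e\otimes e)=\mathrm{X}(e)^2-1$ for item~(1), the cumulant-generating function $-t-\tfrac12\log(1-2t)$ of the centered $\chi^2_1$ plus homogeneity and additivity of cumulants for item~(2), and a direct substitution into the definition~\eqref{eq:Delta} combined with $Q(0)=Q'(0)=0$ for item~(3). All the small justifications you flag (Fubini for the double series via $\ell^2\subset\ell^r$, and a.s.\ convergence via Kolmogorov's two-series criterion) are correct and are precisely what makes the interchange and convergence legitimate.
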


The next corollary is a direct application of our main findings, namely Theorem \ref{thm:main-theorem1} and Theorem \ref{thm:main-theorem2} and provides quantitative bounds for the main results in \cite{n-po-1,a-p-p}.
\begin{cor}\label{cor:2wiener}
Assume that the normalized sequence $F_n=\sum_{k\ge 1} \alpha_{n,k} \big( N^2_k -1 \big)$ belongs to the second Wiener chaos associated to the isonormal process $\rm X$, and the target random variable $F_\infty$ as in $(\ref{target-wiener})$. Then there exists a constant $C>0$ depending only on target random variable $F_\infty$ (and hence independent of $n$) such that 
\begin{enumerate}
\item[(a)] 
$$d_{{\bf \rm W}_2}(F_n,F_\infty) \le \, C \, \bigg( \sqrt{\Delta(F_n)} + \sum_{r=2}^{q+1} \vert \kappa_r(F_n) - \kappa_r(F_\infty) \vert \bigg).$$
\item[(b)] if moreover $\dim_{\mathbb{Q}} \text{span} \{ \alpha^2_{\infty,1},\cdots,\alpha^2_{\infty,q} \} =q$, then $d_{{\bf \rm W}_2}(F_n,F_\infty) \le \, C \, \sqrt{\Delta(F_n)}$. This implies that the sole convergence $\Delta(F_n) \to \Delta(F_\infty)=0$ is sufficient for convergence in distribution towards the target random variable $F_\infty$.
\end{enumerate}
\end{cor}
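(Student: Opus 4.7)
The plan is to recognize the corollary as a direct specialization of Theorem \ref{thm:main-theorem1} and Theorem \ref{thm:main-theorem2} to the choice $W_k := N_k^2 - 1$, with the dictionary between the abstract setting of Section \ref{s:main-result} and the chaos setting provided by Proposition \ref{second-property}.

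First I would verify that $W_k = N_k^2 - 1$ satisfies the standing hypotheses of the main theorems. The $W_k$ are clearly i.i.d., centered, and possess finite moments of every order, being polynomials in i.i.d.\ Gaussians. A direct computation from the cumulant generating function of $N_1^2$ yields $\kappa_r(W_1) = 2^{r-1}(r-1)!$ for every $r \geq 2$, which is nonzero; in particular the nondegeneracy requirement $\kappa_r(W_1) \neq 0$ for $r = 2,\ldots,q+1$ holds automatically. The only mild mismatch is that $\E[W_1^2] = 2$ instead of $1$, which is harmless: a rescaling of the coefficients $\alpha_{n,k}$ and $\alpha_{\infty,k}$ by $\sqrt{2}$ brings us into the normalized framework, or equivalently the factor is absorbed into the constant $C$.

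Next I would invoke item 1 of Proposition \ref{second-property} to represent both $F_n$ and $F_\infty$ as $L^2$- and almost-surely-convergent series in the i.i.d.\ standard Gaussians $N_k$ extracted from the isonormal process, which is precisely the form (\ref{eq:app-general-form})--(\ref{target-wiener}) required by the main theorems. Item 3 of the same proposition ensures that the chaos-theoretic Stein discrepancy $\sum_k Q(\alpha_{n,k})$ agrees with the quantity $\Delta(F_n)$ of Lemma \ref{lem:linear-combiniation}, and item 2 confirms that the cumulants $\kappa_r(F_n)$ match the expression $2^{r-1}(r-1)! \sum_k \alpha_{n,k}^r$ in Lemma \ref{lem:linear-combiniation}. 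Having thus matched every object, part (a) follows immediately from the general bound (\ref{eq:main-estimate-bis}) of Theorem \ref{thm:main-theorem1}, and part (b) follows immediately from Theorem \ref{thm:main-theorem2} under the rational-independence hypothesis. The concluding assertion of part (b) --- that $\Delta(F_n) \to 0$ alone forces weak convergence towards $F_\infty$ --- then results from $d_{W_2}(F_n,F_\infty) \leq C\sqrt{\Delta(F_n)}$ combined with $\Delta(F_\infty) = 0$ (item 3 of Proposition \ref{second-property}) and the standard fact that Wasserstein-2 convergence entails convergence in distribution.

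I do not anticipate any substantive obstacle in the proof of the corollary itself: its entire content is a translation of the already-established main theorems into the chaos language, and the genuinely delicate ingredient --- the Vandermonde-type maximality argument sketched in Section \ref{sec:idea-behind-proof} --- has been carried out at the abstract level in Section \ref{s:main-result} and need not be revisited here.
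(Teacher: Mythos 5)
Your proposal is correct and follows exactly the route the paper intends: the paper presents Corollary~\ref{cor:2wiener} as a ``direct application'' of Theorems~\ref{thm:main-theorem1} and~\ref{thm:main-theorem2}, without writing out a separate proof, and your translation via Proposition~\ref{second-property} (items 1, 2, 3 supplying the series representation, the cumulant formula $\kappa_r(W_1)=2^{r-1}(r-1)!\neq 0$, and the identification of $\Delta(F_n)$ with $\sum_k Q(\alpha_{n,k})$) is precisely the dictionary one needs. Your remark about the variance mismatch $\E[W_1^2]=2$ versus the standing assumption $\E[W_1^2]=1$ in Section~\ref{s:main-result} is a legitimate and slightly more careful observation than the paper makes explicit; it is indeed harmless because the proof of the main theorems uses only the normalization $\sum_k\alpha_{n,k}^2=\text{const}$ and the cumulant identity $\kappa_r(F_n)=\kappa_r(W_1)\sum_k\alpha_{n,k}^r$, while the factor $\E[W_1^2]$ enters the final $W_2$ estimate only as a multiplicative constant (since, under the natural coupling sharing the $W_k$'s, $\E[(F_n-F_\infty)^2]=\E[W_1^2]\,\big(\sum_{k\le\ell_n}(\alpha_{\max}(n,k)-\alpha_\infty(n,k))^2+\sum_{k>\ell_n}\alpha_{\max}(n,k)^2\big)$) and is therefore absorbed into $C$.
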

\begin{rem}\label{rem:thale}{ \rm
The upper bound in Corollary \ref{cor:2wiener}, part (a) requires the separate convergences of the first $q+1$ cumulants for the convergence in distribution towards the target random variable $F_\infty$ as soon as $\dim_{\mathbb{Q}} \text{span} \{ \alpha^2_{\infty,1},\cdots,\alpha^2_{\infty,q} \} < q$. This is in very consistent with a quantitative result in \cite{thale}.  In fact, when $q=2$ and $\alpha_{\infty,1}=- \alpha_{\infty,2}=1/2$, then the target random variable $F_\infty$ $( = N_1 \times N_2$, where $N_1,N_2 \sim \mathscr{N}(0,1)$ are independent and equality holds in law) belongs to the class of {\it Variance--Gamma} distributions $VG_c(r,\theta,\sigma)$ with parameters $r=\sigma=1$ and $\theta=0$. Then, \cite[Theorem 5.10, part (a)]{thale} reads 

\begin{equation}\label{eq:thale-bound}
d_{{\bf \rm W}_1} (F_n,F_\infty) \le C\, \sqrt{\Delta(F_n) + 1/4 \, \kappa^2_3(F_n)}.
\end{equation}
Therefore, for the convergence in distribution of the sequence $F_n$ towards the target random variable $F_\infty$ in addition to convergence $\Delta(F_n) \to \Delta(F_\infty)=0$ one needs also the convergence of the third cumulant $\kappa_3(F_n) \to \kappa_3(F_\infty)=0$. Also note that in this case we have $\dim_{\mathbb{Q}} \text{span} \{ \alpha^2_{\infty,1}, \alpha^2_{\infty,2} \} =1 < q=2$. 
}
\end{rem}

\begin{ex}\label{ex:thale}{ \rm
Again assume that $q=2$ and  $\alpha_{\infty,1}=- \alpha_{\infty,2}=1/2$. Consider the fixed sequence $$F_n=F=\alpha_{\infty,1} (N^2_1 -1) - \alpha_{\infty,2} (N^2_2 -1) \qquad n \ge1.$$ 
Then $\kappa_{2r}(F_n)=\kappa_{2r}(F_\infty)$ for all $r\ge1$,  in particular $\kappa_2(F_n)=\kappa_2(F_\infty)=1$, and $\Delta(F_n)=\Delta(F_\infty)=0$. However, it is easy to see that 
the sequence $F_n$ does not converges in distribution towards the target random variable $F_\infty$, because $2=\kappa_3(F_n)  \nrightarrow  \kappa_3(F_\infty)=0$. Again, we would 
like to stress that in this case we have $\dim_{\mathbb{Q}} \text{span} \{ \alpha^2_{\infty,1}, \alpha^2_{\infty,2} \} =1 < q=2$. Therefore the requirement of separate convergences 
of the first $q+1$ cumulants is essential in Theorem \ref{thm:main-theorem1} as soon as $\dim_{\mathbb{Q}} \text{span} \{ \alpha^2_{\infty,1}, \cdots, \alpha^2_{\infty,q} \}  <  q$.
} 
\end{ex}
%
\begin{ex}{\bf [Bai--Taqqu Theorem, 2015]} { \rm
We conclude this section with a more ambitious example, providing rates of
convergence in a recent result given by \cite[Theorem 2.4]{b-t}. We
stress that many more examples and situations could be tackled by our
method. Let $Z_{\gamma_1,\gamma_2}$ be the random variable defined by:
 \begin{align*}
  Z_{\gamma_1,\gamma_2}=\int_{\mathbb{R}^2}\bigg(\int_0^1(s-x_1)^{\gamma_1}_+(s-x_2)^{\gamma_2}_+ds\bigg)dB_{x_1}dB_{x_2},
 \end{align*}
 with $\gamma_i\in(-1,-1/2)$ and $\gamma_1+\gamma_2>-3/2$. By Proposition 3.1 of \cite{b-t}, we have the following formula for the cumulants of $Z_{\gamma_1,\gamma_2}$:
 \begin{equation*}
 \kappa_m\big(Z_{\gamma_1,\gamma_2}\big) =\frac{1}{2}(m-1)!A(\gamma_1,\gamma_2)^mC_m(\gamma_1,\gamma_2,1,1)
 \end{equation*}
  where, 
\begin{align*}
A(\gamma_1,\gamma_2)&=[(\gamma_1+\gamma_2+2)(2(\gamma_1+\gamma_2)+3)]^{\frac{1}{2}}\\
&\times[B(\gamma_1+1,-\gamma_1-\gamma_2-1)B(\gamma_2+1,-\gamma_1-\gamma_2-1)\\
&+B(\gamma_1+1,-2\gamma_1-1)B(\gamma_2+1,-2\gamma_2-1)]^{-\frac{1}{2}},\\
C_m(\gamma_1,\gamma_2,1,1)&=\sum_{\sigma\in\{1,2\}^m}\int_{(0,1)^m}\prod_{j=1}^m[(s_j-s_{j-1})^{\gamma_{\sigma_j}+\gamma_{\sigma'_{j-1}}+1}_+B(\gamma_{\sigma'_{j-1}}+1,-\gamma_{\sigma_j}-\gamma_{\sigma'_{j-1}}-1)\\
&+(s_{j-1}-s_j)^{\gamma_{\sigma_j}+\gamma_{\sigma'_{j-1}}+1}_+B(\gamma_{\sigma_j}+1,-\gamma_{\sigma_{j}}-\gamma_{\sigma'_{j-1}}-1)]ds_1...ds_m,\\
&B(\alpha,\beta)=\int_0^1u^{\alpha-1}(1-u)^{\beta-1}du.
\end{align*}
Let $\rho\in(0,1)$ and $Y_{\rho}$ be the random variable defined by:
\begin{align*}
Y_{\rho}=\dfrac{a_{\rho}}{\sqrt{2}}(Z_1^2-1)+\dfrac{b_{\rho}}{\sqrt{2}}(Z_2^2-1),
\end{align*}
with $Z_i$ independent standard normal random variables and $a_{\rho}$ and $b_{\rho}$ defined by:
\begin{align*}
&a_{\rho}=\dfrac{(\rho+1)^{-1}+(2\sqrt{\rho})^{-1}}{\sqrt{2(\rho+1)^{-2}+(2\rho)^{-1}}},\\
&b_{\rho}=\dfrac{(\rho+1)^{-1}-(2\sqrt{\rho})^{-1}}{\sqrt{2(\rho+1)^{-2}+(2\rho)^{-1}}}.
\end{align*}
For simplicity, we assume that $\gamma_1\geq \gamma_2$ and $\gamma_2=(\gamma_1+1/2)/\rho-1/2$. Then \cite[Theorem 2.4]{b-t} implies that as $\gamma_1$ tends to $-1/2$:
\begin{align}\label{eq:bai-taqqu}
Z_{\gamma_1,\gamma_2}\stackrel{\text{law}}{\to} Y_{\rho}.
\end{align}
 Note that, in this case, $\gamma_2$ automatically tends to $-1/2$ as well. To prove the previous result, the authors of \cite{b-t} prove the following convergence result:
\begin{align*}
\forall m\geq 2,\ \kappa_m\big(Z_{\gamma_1,\gamma_2}\big)\rightarrow\kappa_m\big(Y_{\rho}\big)=2^{\frac{m}{2}-1}(a_{\rho}^m+b_{\rho}^m)(m-1)!.
\end{align*} 
Now, using Corollary \ref{cor:2wiener} and applying Lemma \ref{lem:cumasym}, we can present the following quantative bound for convergence $(\ref{eq:bai-taqqu})$, namely as $\gamma_1$ 
tends to $-1/2$:
\begin{equation*}
\ud_{W_2} (Z_{\gamma_1,\gamma_2},Y_{\rho}) \le C_\rho \, \sqrt{- \gamma_1 - \frac{1}{2}},
\end{equation*}
where $C_{\rho}$ is some strictly positive constant depending on $\rho$ uniquely.
}
\end{ex}
In order to apply Corollary \ref{cor:2wiener} to obtain an explicit rate for convergence $(\ref{eq:bai-taqqu})$, we need to know at which speed $\kappa_m\big(Z_{\gamma_1,\gamma_2}\big)$ 
converges towards $\kappa_m\big(Y_{\rho}\big)$. For this purpose, we have the following lemma:
\begin{lem}\label{lem:cumasym}
Under the above assumptions, for any $m\geq 3$, we have, as $\gamma_1$ tends to $-1/2$:
\begin{align*}
\kappa_m\big(Z_{\gamma_1,\gamma_2}\big)=\kappa_m\big(Y_{\rho}\big)+O\big(-\gamma_1-\frac{1}{2}\big)
\end{align*}
\end{lem}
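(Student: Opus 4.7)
The plan is to set $\epsilon = -\gamma_1 - 1/2 > 0$, so that the scaling constraint $\gamma_2 = (\gamma_1 + 1/2)/\rho - 1/2$ becomes $\gamma_2 = -1/2 - \epsilon/\rho$, and both arguments tend to $-1/2$ as $\epsilon\to 0^+$. Using the explicit formula $\kappa_m(Z_{\gamma_1,\gamma_2}) = \tfrac{1}{2}(m-1)!\,A(\gamma_1,\gamma_2)^m\, C_m(\gamma_1,\gamma_2,1,1)$, I would separately compute an asymptotic expansion of $A$ and of $C_m$ around $\epsilon=0$, observe that the singular parts multiply to produce exactly $\kappa_m(Y_\rho)$, and then track the next-order term.

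First, for the normalizing constant $A(\gamma_1,\gamma_2)$: the Beta functions appearing in its denominator, namely $B(\gamma_i+1,-2\gamma_i-1)$ and $B(\gamma_i+1,-\gamma_1-\gamma_2-1)$, have their second arguments equal to $2\epsilon$, $2\epsilon/\rho$, and $\epsilon(1+1/\rho)$ respectively. Using $\Gamma(\beta)=\beta^{-1}-\gamma_E+O(\beta)$ together with the analyticity of $\Gamma$ at $1/2$, each Beta admits an expansion of the form $\beta^{-1}(c_0+c_1\epsilon+O(\epsilon^2))$ with explicit $c_0,c_1$. Substituting, a direct algebraic check shows that the $\epsilon^{-2}$ leading terms in the denominator of $A^2$ combine as $\pi\rho^2\bigl[\tfrac{2}{(\rho+1)^2}+\tfrac{1}{2\rho}\bigr]/(2\epsilon^2)$, the same bracket appearing in the normalising constants $a_\rho,b_\rho$. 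Hence $A(\gamma_1,\gamma_2)=A_0(\rho)\,\epsilon + O(\epsilon^2)$ for an explicit $A_0(\rho)>0$, and a fortiori $A^m=A_0(\rho)^m\,\epsilon^m(1+O(\epsilon))$.

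Next, for the combinatorial integral $C_m(\gamma_1,\gamma_2,1,1)$, each of the $2^m$ summands indexed by $\sigma\in\{1,2\}^m$ is an $m$-fold integral whose integrand is a product of $m$ bracketed factors; each bracket is a weighted sum of two terms of the form $|s_j-s_{j-1}|^{\gamma_i+\gamma_j+1}\,B(\gamma_k+1,-\gamma_i-\gamma_j-1)$. The crucial observation is that the Beta coefficients again produce a $1/\epsilon$ factor each, giving a total prefactor $\epsilon^{-m}$, while the remaining power-function integrals over $(0,1)^m$ stay uniformly bounded and extend analytically up to $\epsilon=0$ (the exponents $\gamma_i+\gamma_j+1\to 0$, so the singularities at coinciding $s_j$ are integrable with room to spare). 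After differentiating in $\epsilon$ under the integral sign, which is justified by a uniform dominated-convergence bound on $(0,\delta]$, one obtains $C_m(\gamma_1,\gamma_2,1,1)=\epsilon^{-m}\bigl[C_m^{(0)}(\rho)+\epsilon\,C_m^{(1)}(\rho)+O(\epsilon^2)\bigr]$ with $C_m^{(0)}(\rho)$ explicit.

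Combining these two expansions gives $A^m C_m = A_0(\rho)^m C_m^{(0)}(\rho) + O(\epsilon)$, and the leading constant must coincide with $\tfrac{2}{(m-1)!}\kappa_m(Y_\rho)=2^{m/2}(a_\rho^m+b_\rho^m)$ because Bai–Taqqu already proved pointwise convergence of the cumulants. This identification pins down the leading-order term without any additional algebra and yields
\[
\kappa_m(Z_{\gamma_1,\gamma_2}) \;=\; \kappa_m(Y_\rho) \;+\; O(\epsilon),
\]
which is the claim. The main obstacle is Step on $C_m$: the explicit control of the sub-leading term in the $m$-fold integral requires patient bookkeeping over the $2^m$ sign sequences $\sigma$ and uniform estimates on the power-function integrand near the diagonals $s_{j-1}=s_j$. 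A conceptual shortcut that I would attempt first is to show that the map $\epsilon\mapsto \epsilon^m\,C_m(\gamma_1(\epsilon),\gamma_2(\epsilon),1,1)$ extends to a $C^1$-function on a neighbourhood of $0$; if this holds, the mean value theorem immediately gives the rate $O(\epsilon)$ without spelling out $C_m^{(1)}(\rho)$ explicitly.
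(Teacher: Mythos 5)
Your proposal follows essentially the same route as the paper: expand $A(\gamma_1,\gamma_2)$ and $C_m(\gamma_1,\gamma_2,1,1)$ separately in powers of $\epsilon=-\gamma_1-1/2$, observe that $A$ is $O(\epsilon)$ while $C_m$ is $O(\epsilon^{-m})$, multiply and read off the rate. The paper carries out the algebra explicitly for both factors (isolating, e.g., the $\log|s_j-s_{j-1}|$ contributions coming from the $(s_j-s_{j-1})^{\gamma_{\sigma_j}+\gamma_{\sigma'_{j-1}}+1}_+$ powers and the digamma terms coming from the Beta expansions), and then matches the leading constant to $\kappa_m(Y_\rho)$ by invoking the computations of Bai--Taqqu.

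The one place where you genuinely deviate is the identification of the leading coefficient: you appeal to the already-established pointwise convergence $\kappa_m(Z_{\gamma_1,\gamma_2})\to\kappa_m(Y_\rho)$ to pin down $A_0(\rho)^m C_m^{(0)}(\rho)$ without algebra, whereas the paper re-derives it. This is a legitimate and cleaner economy, since the Lemma's entire content is the \emph{rate}, not the \emph{limit}. Your proposed ``conceptual shortcut'' --- proving that $\epsilon\mapsto\epsilon^m C_m$ extends to a $C^1$ function near $0$ and then applying the mean value theorem --- is also sound in principle, but note that establishing this $C^1$-extendibility still requires essentially the same uniform dominated-convergence bounds near the diagonals $s_{j-1}=s_j$ that the explicit expansion requires, so it does not substantially reduce the work; it merely repackages it. You should be a bit more careful at the one step where you assert that the power-function integrals ``stay uniformly bounded and extend analytically up to $\epsilon=0$'': the integrand contains $\log|s_j-s_{j-1}|$ factors in the first-order term, which are integrable but not bounded, so the dominating function must itself be an integrable (not bounded) majorant. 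With that caveat the argument goes through.
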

\begin{proof}
First of all, we note that, as $\gamma_1$ tends to $-1/2$:
\begin{align*}
A(\gamma_1,\gamma_2)&=[(\gamma_1+\frac{1}{\rho}(\gamma_1+\frac{1}{2})+\frac{3}{2})(2\gamma_1+\frac{2}{\rho}(\gamma_1+\frac{1}{2})+2)]^{\frac{1}{2}}\\
&\times[B(\gamma_1+1,-(1+\frac{1}{\rho})(\gamma_1+\frac{1}{2}))B(\frac{1}{\rho}(\gamma_1+\frac{1}{2})+\frac{1}{2},-(1+\frac{1}{\rho})(\gamma_1+\frac{1}{2}))\\
&+B(\gamma_1+1,-2\gamma_1-1)B(\frac{1}{\rho}(\gamma_1+\frac{1}{2})+\frac{1}{2},-\frac{2}{\rho}(\gamma_1+\frac{1}{2}))]^{-\frac{1}{2}},\\
&\approx \dfrac{(-\gamma_1-1/2)}{\sqrt{(1+\frac{1}{\rho})^{-2}+(\frac{4}{\rho})^{-1}}}-C_{\rho}(-3+2\gamma+2\psi\big(\frac{1}{2}\big))(\gamma_1+\frac{1}{2})^2\\
&+o((-\gamma_1-1/2)^2),
\end{align*}
where $\gamma$ is the Euler constant, $\psi(.)$ is the Digamma function and $C_{\rho}$ some strictly positive constant depending on $\rho$ uniquely. Note that $-3+2\gamma+2\psi\big(1/2\big)<0$. Moreover, we have:
\begin{align}
\nonumber C_m(\gamma_1,\gamma_2,1,1)&\approx \sum_{\sigma\in\{1,2\}^m}\int_{(0,1)^m}\prod_{j=1}^m\bigg\{\mathbb{I}_{s_j>s_{j-1}}\bigg[(-\gamma_{\sigma_j}-\gamma_{\sigma'_{j-1}}-1)^{-1}-\log(s_j-s_{j-1})+(-\gamma-\psi\big(\frac{1}{2}\big))\\
\nonumber &+o(1) \bigg]+\mathbb{I}_{s_j<s_{j-1}}\bigg[ (-\gamma_{\sigma_j}-\gamma_{\sigma'_{j-1}}-1)^{-1}-\log(s_{j-1}-s_j)+(-\gamma-\psi\big(\frac{1}{2}\big))+o(1)\bigg]\bigg\}ds_1...ds_m\\
\nonumber &\approx \sum_{\sigma\in\{1,2\}^m}\int_{(0,1)^m}\prod_{j=1}^m\bigg[(-\gamma_{\sigma_j}-\gamma_{\sigma'_{j-1}}-1)^{-1}+\mathbb{I}_{s_j>s_{j-1}}\log((s_j-s_{j-1})^{-1})\\
&+\mathbb{I}_{s_j<s_{j-1}}\log((s_{j-1}-s_j)^{-1})+(-\gamma-\psi\big(\frac{1}{2}\big))+o(1)\bigg]ds_1...ds_m.\label{approxCm}
\end{align}
Note that $-\gamma-\psi\big(\frac{1}{2}\big)>0$. The diverging terms in $C_m(\gamma_1,\gamma_2,1,1)$ are $B(\gamma_{\sigma'_{j-1}}+1,-\gamma_{\sigma_j}-\gamma_{\sigma'_{j-1}}-1)$ and $B(\gamma_{\sigma_j}+1,-\gamma_{\sigma_{j}}-\gamma_{\sigma'_{j-1}}-1)$. At $\sigma$ and $j$ fixed, the only possible values are:
\begin{align*}
&B(\gamma_1+1,-\gamma_{1}-\gamma_{2}-1)=B(\gamma_1+1,-(\gamma_1+\frac{1}{2})(1+\frac{1}{\rho})),\\
&\approx -\dfrac{1}{(1+\frac{1}{\rho})(\gamma_1+\frac{1}{2})}+(-\gamma-\psi(\frac{1}{2}))+o(1),\\
&B(\gamma_2+1,-\gamma_{1}-\gamma_{2}-1)=B(\frac{1}{\rho}(\gamma_1+\frac{1}{2})+\frac{1}{2},-(\gamma_1+\frac{1}{2})(1+\frac{1}{\rho})),\\
&\approx -\dfrac{1}{(1+\frac{1}{\rho})(\gamma_1+\frac{1}{2})}+(-\gamma-\psi(\frac{1}{2}))+o(1),\\
&B(\gamma_1+1,-2\gamma_{1}-1)\approx -\dfrac{1}{2(\gamma_1+\frac{1}{2})}+(-\gamma-\psi(\frac{1}{2}))+o(1),\\
&B(\gamma_2+1,-2\gamma_{2}-1)=B(\frac{1}{\rho}(\gamma_1+\frac{1}{2})+\frac{1}{2},-\frac{2}{\rho}(\gamma_1+\frac{1}{2})),\\
&\approx -\dfrac{\rho}{2(\gamma_1+\frac{1}{2})}+(-\gamma-\psi(\frac{1}{2}))+o(1).
\end{align*}
Moreover, we have, for $j$ fixed:
\begin{align*}
(s_j-s_{j-1})^{\gamma_{\sigma_j}+\gamma_{\sigma'_{j-1}}+1}_+&=\mathbb{I}_{s_j>s_{j-1}}(s_j-s_{j-1})^{\gamma_{\sigma_j}+\gamma_{\sigma'_{j-1}}+1}\\
&\approx \mathbb{I}_{s_j>s_{j-1}}[1+\log(s_j-s_{j-1})(\gamma_{\sigma_j}+\gamma_{\sigma'_{j-1}}+1)
+o((\gamma_{\sigma_j}+\gamma_{\sigma'_{j-1}}+1))].
\end{align*}
Developing the product in the right hand side of (\ref{approxCm}), we obtain: 
\begin{align*}
C_m(\gamma_1,\gamma_2,1,1)&\approx \sum_{\sigma\in\{1,2\}^m}\prod_{j=1}^m(-\gamma_{\sigma_j}-\gamma_{\sigma'_{j-1}}-1)^{-1}\\
&+(-\gamma-\psi\big(\frac{1}{2}\big))\sum_{\sigma\in\{1,2\}^m}\sum_{j=1}^m\prod_{k=1,\ k\ne j}^m(-\gamma_{\sigma_k}-\gamma_{\sigma'_{k-1}}-1)^{-1}\\
&+\sum_{\sigma\in\{1,2\}^m}\sum_{j=1}^m\prod_{k=1,\ k\ne j}^m(-\gamma_{\sigma_k}-\gamma_{\sigma'_{k-1}}-1)^{-1}\int_{(0,1)^m}\bigg[\mathbb{I}_{s_j>s_{j-1}}\log((s_j-s_{j-1})^{-1})\\
&+\mathbb{I}_{s_j<s_{j-1}}\log((s_{j-1}-s_j)^{-1})\bigg]ds_1...ds_m+o((-\gamma_1-\frac{1}{2})^{-m+1})\\
&\approx \sum_{\sigma\in\{1,2\}^m}\prod_{j=1}^m(-\gamma_{\sigma_j}-\gamma_{\sigma'_{j-1}}-1)^{-1}\\
&+(-\gamma-\psi\big(\frac{1}{2}\big)+\frac{3}{2})\sum_{\sigma\in\{1,2\}^m}\sum_{j=1}^m\prod_{k=1,\ k\ne j}^m(-\gamma_{\sigma_k}-\gamma_{\sigma'_{k-1}}-1)^{-1}\\
&+o((-\gamma_1-\frac{1}{2})^{-m+1})\\
\end{align*}
This leads to the following asymptotic for the cumulants of $Z_{\gamma_1,\gamma_2}$,
\begin{align*}
\kappa_m\big(Z_{\gamma_1,\gamma_2}\big)&\approx \frac{(m-1)!}{2}\bigg[\dfrac{(-\gamma_1-1/2)}{\sqrt{(1+\frac{1}{\rho})^{-2}+(\frac{4}{\rho})^{-1}}}-C_{\rho}(-3+2\gamma+2\psi\big(\frac{1}{2}\big))(\gamma_1+\frac{1}{2})^2\\
&+o((-\gamma_1-1/2)^2)\bigg]^m\bigg[\sum_{\sigma\in\{1,2\}^m}\prod_{j=1}^m(-\gamma_{\sigma_j}-\gamma_{\sigma'_{j-1}}-1)^{-1}\\
&+(-\gamma-\psi\big(\frac{1}{2}\big)+\frac{3}{2})\sum_{\sigma\in\{1,2\}^m}\sum_{j=1}^m\prod_{k=1,\ k\ne j}^m(-\gamma_{\sigma_k}-\gamma_{\sigma'_{k-1}}-1)^{-1}\\
&+o((-\gamma_1-\frac{1}{2})^{-m+1})\bigg],\\
&\approx \frac{(m-1)!}{2}\dfrac{(-\gamma_1-1/2)^m}{\bigg(\sqrt{(1+\frac{1}{\rho})^{-2}+(\frac{4}{\rho})^{-1}}\bigg)^m}\sum_{\sigma\in\{1,2\}^m}\prod_{j=1}^m(-\gamma_{\sigma_j}-\gamma_{\sigma'_{j-1}}-1)^{-1}\\
&+O((-\gamma_1-\frac{1}{2}))\\
&\approx \kappa_m\big(Y_{\rho}\big)+O((-\gamma_1-\frac{1}{2})),
\end{align*}
where we have used similar computations as in the proof of Theorem $2.4$ of \cite{b-t} for the last equality.
\end{proof}

\subsection{Proof of Theorem \ref{thm:main-theorem1}}

In what follows, we will need the following useful lemma. 
\begin{lem}\label{lem:appendix}
For the vector $\bm{ \alpha_\infty} = (\alpha_{\infty,1}, \cdots,\alpha_{\infty,q}) \in \R^q$ where $\alpha_{\infty,i}$ are non-zero and distinct, we denote  
\begin{equation*}
d(x,\bm{ \alpha_\infty} ):= \min_{i=1,\cdots,q} \, \vert x - \alpha_{\infty,i} \vert, \qquad \forall \, x \in \R.
\end{equation*}
Then, there exists a constant $M$ such that
\begin{equation*}
d(x, \bm{ \alpha_\infty})^2 \le M \prod_{i=1}^q (x - \alpha_{\infty,i})^2.
\end{equation*}
\end{lem}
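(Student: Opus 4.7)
The plan is to show that the quotient
\[
\varphi(x) := \frac{d(x,\bm{\alpha_\infty})^2}{\prod_{i=1}^q (x-\alpha_{\infty,i})^2}
\]
extends continuously to all of $\mathbb{R}$ and remains bounded as $|x|\to\infty$; the lemma then follows by taking $M$ to be the supremum of this extension. The point is that the (a priori) singularities of $\varphi$ at the points $\alpha_{\infty,k}$ are removable because the numerator already vanishes to order two there, precisely matching the order of the corresponding factor in the denominator.

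First I would localise around each $\alpha_{\infty,k}$. Fix $\delta := \tfrac{1}{2}\min_{i\ne j}|\alpha_{\infty,i}-\alpha_{\infty,j}|$ and consider the ball $B_k := \{x: |x-\alpha_{\infty,k}|<\delta\}$. On $B_k$ the minimum in the definition of $d(x,\bm{\alpha_\infty})$ is attained at index $k$, so that $d(x,\bm{\alpha_\infty})^2 = (x-\alpha_{\infty,k})^2$. Cancelling this factor gives
\[
\varphi(x) = \frac{1}{\prod_{i\ne k}(x-\alpha_{\infty,i})^2}\qquad\text{on } B_k,
\]
which is continuous and bounded on $B_k$ (the denominator is bounded below by $\prod_{i\ne k}(|\alpha_{\infty,k}-\alpha_{\infty,i}|-\delta)^2>0$). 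In particular $\varphi$ admits a continuous extension at each $\alpha_{\infty,k}$.

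Next I would handle the region $K:=\{x: \min_k|x-\alpha_{\infty,k}|\ge\delta\}\cap\{|x|\le R\}$ for a large radius $R$. On this compact set the denominator is continuous and bounded away from zero, while the numerator is bounded, so $\varphi$ is bounded on $K$ by elementary means. Finally, for $|x|\ge R$ with $R$ large enough, one has $|x-\alpha_{\infty,i}|\ge |x|/2$ for every $i$, whence
\[
\varphi(x) \le \frac{(|x|+\max_i|\alpha_{\infty,i}|)^2}{(|x|/2)^{2q}} = O(|x|^{2-2q}),
\]
so $\varphi(x)\to 0$ as $|x|\to\infty$ when $q\ge 2$ (and $\varphi\equiv 1$ when $q=1$). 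Combining the three regimes, $\varphi$ is a continuous function on $\mathbb{R}$ with a finite limit at infinity, hence bounded, and any upper bound $M$ provides the desired inequality.

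The argument is essentially bookkeeping rather than deep: the only subtlety is verifying that the piecewise definitions near the $\alpha_{\infty,k}$ glue together consistently with the global formula for $\varphi$, which is guaranteed by the choice of $\delta$ making the balls $B_k$ pairwise disjoint and ensuring that $\alpha_{\infty,k}$ is unambiguously the closest root inside $B_k$. No other step requires a clever estimate.
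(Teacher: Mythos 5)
Your proof is correct and takes essentially the same approach as the paper: the paper considers the reciprocal quotient $f(x) = \prod_i (x-\alpha_{\infty,i})^2 / d(x,\bm{\alpha_\infty})^2$, extends it continuously to all of $\R$ (with value $\prod_{j\neq i}(\alpha_{\infty,j}-\alpha_{\infty,i})^2 > 0$ at each $\alpha_{\infty,i}$), notes $f(x)\to\infty$ as $|x|\to\infty$, and concludes $f$ is bounded below by a positive constant. Your argument is a mirror image of this (bounding $\varphi = 1/f$ from above), with the removable-singularity step spelled out more explicitly via the $\delta$-balls; the two are equivalent.
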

\begin{proof}
Consider the function $f:\R - \{ \alpha_{\infty,1},\cdots, \alpha_{\infty,q} \} \to \R$ given by
\begin{equation*}
f(x):= \frac{\prod_{i=1}^q (x - \alpha_{\infty,i})^2}{d(x,  \bm{ \alpha_\infty})^2}.
\end{equation*}
Then, obviously $f$ is a continuous function on
$\R - \{ \alpha_{\infty,1},\cdots, \alpha_{\infty,q} \}$ and can be
extended to a continuous function on whole real line $\R$ by setting
$f(\alpha_{\infty,i}):= \prod_{j \neq i} (\alpha_{\infty,j} -
\alpha_{\infty,i})^2 \neq 0$
at each point $\alpha_{\infty,i}$ for $i=1,\cdots,q$. On the other
hand, note that we have $f(x) \to \infty$ as $\vert x \vert$ tends to
infinity. Hence, $f$ is bounded from below by a positive constant, say
$M$.
\end{proof}

  We split the proofs of Theorems \ref{thm:main-theorem1} and
  \ref{thm:main-theorem2} in several steps. Throughout, $C$ stands
  for a generic constant that  is independent of $n$ but  may differ
  from line to line.

\

\noindent {\bf Step 1}: (Re-ordering the coefficients) Under the
second moment conditions $(\ref{eq:2moment-condition})$, we know that
for any fixed $n \ge 1$, we have $\lim_{k \to \infty} \alpha_{n,k}=0$.
Therefore, $\max \{ \vert \alpha_{n,k} \vert \, : \, k \ge 1 \}$ is
attained in, at least one value, say $\vert \alpha_{n,k_1}\vert$.
Similarly, $\max \{ \vert \alpha_{n,k} \vert \, : \, k \neq k_1 \}$ is
attained in some value $\vert \alpha_{n,k_2}\vert$. We repeat this
procedure by induction and we may build a decreasing sequence
$\vert \alpha_{n,k_1}\vert \ge \vert \alpha_{n,k_2}\vert \ge \cdots
\vert \alpha_{n,k_p}\vert \ge \cdots$
such that for all $i \ge 1$, we have
\begin{equation*}
\vert \alpha_{n,k_i}\vert = \max \{ \vert \alpha_{n,k}\vert \, : \, k\neq k_1,k_2,\cdots,k_{i-1}\}.
\end{equation*}
Also, for all $n,p \ge 1$ we have $1 \ge \sum_{i=1}^{p}\alpha^2_{n,k_i} \ge \sum_{i=1}^{p} \alpha^2_{n,k} \to 1$ as $p$ tends to infinity. Therefore
\begin{equation}\label{eq:re-ordering}
F_n \stackrel{\text{law}}{=} \sum_{i=1}^{\infty} \alpha_{n,k_i} W_i, \qquad \forall \, n \ge 1.
\end{equation}
To emphasize the maximality property of $\alpha_{n,k_i}$, we denote
$\alpha_{n,k_i}$ by $\alpha_{\text{max}}(n,i)$, and in the rest of the
proof we assume that for each $n \ge 1$, $F_n$ is
given by the right hand side of $(\ref{eq:re-ordering})$.

\

\noindent {\bf Step 2}: (Bounding the $\alpha_{\text{max}}(n,i)$'s
from below)
%
For any $p\ge 1$, we introduce the quantity 

\begin{equation}\label{eq:Delta-p}
\Delta_p(F_n):= \sum_{k=p}^{\infty} \alpha^2_{\text{max}}(n,k) \prod_{r=1}^{q} \left( \alpha_{\text{max}}(n,k) -\alpha_{\infty,r}\right)^2.
\end{equation}
Next, we observe that
\begin{align}
\Delta_p(F_n)& = \sum_{k=p}^{\infty} Q(\alpha_{\text{max}}(n,k)) =\sum_{r=2}^{2q+2} \Theta_r \sum_{k=p}^{\infty}
  \alpha^r_{\text{max}}(n,k)
               \nonumber \\
\label{eq:product-estimate}
& = \Theta_2 \Big(1 - \sum_{k=1}^{p-1} \alpha^2_{\text{max}}(n,k)
  \Big) + \sum_{r=3}^{2q+2} \Theta_r \sum_{k=p}^{\infty}
  \alpha^r_{\text{max}}(n,k).
\end{align}

Besides, for any $r\ge3$ and the maximality property of the
coefficients $\alpha_{\text{max}}(n,k)$ together with the
normalization assumption $(\ref{eq:2moment-condition})$ we have the
following estimate (which is valid for all $r\ge 3$):
\begin{equation*}
\left\vert \sum_{k=p}^{\infty} \alpha^r_{\text{max}}(n,k) \right\vert \le       \left \vert  \alpha^{r-2}_{\text{max}}(n,p) \right \vert \times  \left( 1- \sum_{k=1}^{p-1}  \alpha^2_{\text{max}}(n,k) \right).
\end{equation*}
Also note  that, since  $ \vert \alpha_{\text{max}}(n,p) \vert \le
1$, we always have $ \vert \alpha_{\text{max}}(n,p) \vert^{r-2} \le
\vert \alpha_{\text{max}}(n,p) \vert$ (still for all $r\ge 3$). We may deduce
\begin{equation*}
\Big \vert \Delta_p(F_n)  -  \Theta_2 \Big(1 - \sum_{k=1}^{p-1} \alpha^2_{\text{max}}(n,k) \Big)
\Big \vert   \le \vert \alpha_{\text{max}}(n,p) \vert \Big(1 -
\sum_{k=1}^{p-1} \alpha^2_{\text{max}}(n,k) \Big) \sum_{r=3}^{2q+2}
\vert \Theta_r \vert,
\end{equation*}
leading in turn to the lower bound
\begin{equation}\label{eq:general-lower-estimate}
\Big \vert \alpha_{\text{max}}(n,p) \Big \vert \ge \frac{\vert \Theta_2 \vert}{\sum_{r=3}^{2q+2} \vert \Theta_r \vert} - \frac{\Delta_p(F_n)}{ \Big(1 - \sum_{k=1}^{p-1} \alpha^2_{\text{max}}(n,k) \Big) 
\times  \sum_{r=3}^{2q+2} \vert \Theta_r \vert}.
\end{equation}
Note that in the right hand side of \eqref{eq:general-lower-estimate},
the first summand depends only on the limiting law. In order to deal
with the second summand, we need control on
$1 - \sum_{k=1}^{p-1} \alpha^2_{\text{max}}(n,k)$. To this end we
introduce the following useful quantities:
\begin{equation}\label{eq:vartheta}
\vartheta = \min \Big\{ 1 - \sum_{i=1}^q n_i \alpha^2_{\infty,i} \, \Big\vert \, (n_1,\cdots,n_q) \in \N_0^q, \quad \text{and} \quad 1 - \sum_{i=1}^q n_i \alpha^2_{\infty,i} > 0 \Big\}.
\end{equation}
\begin{equation}\label{eq:varkappa}
\varkappa=\max \Big\{ 1 - \sum_{i=1}^q n_i \alpha^2_{\infty,i} \, \Big\vert \, (n_1,\cdots,n_q) \in \N_0^q, \quad \text{and} \quad 1 - \sum_{i=1}^q n_i \alpha^2_{\infty,i} < 0 \Big\}.
\end{equation}
Note that, for any vector $(n_1,\cdots,n_q) \in \N_0^q$ such that $ 1
- \sum_{i=1}^q n_i \alpha^2_{\infty,i} > 0$  we have
\begin{equation*}
\vartheta < 1 - \sum_{i=1}^q n_i \alpha^2_{\infty,i} \le 1 - \alpha^2_{\text{min}}(\infty) \left( \sum_{i=1}^q n_i \right)
\end{equation*}
with $\alpha^2_{\text{min}}(\infty)= \min \{ \alpha^2_{\infty,i} \,
: \,   i=1,\cdots,q\}$. 
This leads to the important upper bound estimate 
\begin{equation}\label{eq:repetition-upper-estimate}
\sum_{i=1}^q n_i \le \frac{1- \vartheta}{\alpha^2_{\text{min}}(\infty)},
\end{equation}
which is finite because our assumption on the coefficients of the
target random variable $F_\infty$ implies that 
$\alpha^2_{\text{min}}(\infty) \neq 0$. Finally we
set
\begin{equation*}\label{eq:L-uuper-estimate}
L:= \lfloor \frac{1-\vartheta}{\alpha^2_{\text{min}}(\infty)} \rfloor
\mbox{ and }\varrho:= \min \{ \vartheta,\vert \varkappa \vert \}.
\end{equation*}

\

\noindent {\bf Step 3}: (Induction procedure) We now aim to
use Step $2$ to control the distance between the eigenvalues
$\alpha_{\text{max}}(n,p)$ and the eigenvalues $\alpha_{\infty,p}$ of the
target random variable $F_\infty$ in terms of $\Delta(F_n)$.  Taking
into account the assumption $\Delta(F_n) \to 0$ as $n$ tends to
infinity, one can find an $N_0 \in \N$ such that
\begin{equation}\label{eq:Delta_n-estimate}
\Delta(F_n) \le \min \{ \frac{\vert \Theta_2 \vert}{2}, \frac{\varrho
  \vert \Theta_2 \vert}{4} \} \, \text{ and } \, \sqrt{L+1} \times
\sqrt{M \times \left(  \frac{2 \sum_{r=3}^{2q+2} \vert \Theta_r \vert}{\vert \Theta_2 \vert} \right)^2 \times \Delta(F_n)} < \frac{\varrho}{2}.
\end{equation}
Next, for any $n \ge N_0$ we define
\begin{equation*}
\mathscr{A}_n:= \Big \{ 1 \le p \le L  :  \text{ for any } 1 \le l \le p \text{ we have }     \vert \alpha_{\text{max}}(n,l) \vert             \ge  \frac{\vert \Theta_2 \vert}{2 \sum_{r=3}^{2q+2} \vert \Theta_r \vert}    \Big\}.
\end{equation*}
The collection $\mathscr{A}_n$ is not empty. In fact, using the
estimate $(\ref{eq:general-lower-estimate})$ with $p=1$, one can
immediately get
\begin{equation}\label{eq:alpha1-estimate}
\Big \vert \alpha_{\text{max}}(n,1) \Big\vert \ge \frac{\vert \Theta_2 \vert}{\sum_{r=3}^{2q+2} \vert \Theta_r \vert} - \frac{\Delta_1(F_n)}{  \sum_{r=3}^{2q+2} \vert \Theta_r \vert} \ge  \frac{\vert \Theta_2 \vert}{2 \sum_{r=3}^{2q+2} \vert \Theta_r \vert}
\end{equation}
because, for $n\ge N_0$, we know that
$\Delta_1(F_n) = \Delta(F_n)\le \frac{\vert \Theta_2 \vert}{2}$ thanks
to the first inequality in $(\ref{eq:Delta_n-estimate})$.  Note that
for any $p \ge 1$, we have $\Delta_p(F_n) \le \Delta(F_n) \to 0$ as
$n$ tends to infinity.  On the other hand, $\mathscr{A}_n$ is set
bounded by $L$, and therefore has a maximal element which we denote by
$L_n$. By the definitions of $\Delta(F_n)$ and of the set
$\mathscr{A}_n$ we infer that
\begin{equation*}
\sum_{k=1}^{L_n} \prod_{r=1}^{q} \left( \alpha_{\text{max}}(n,k) -
  \alpha_{\infty,r} \right)^2 \le  \left( \frac{ 2 \sum_{r=3}^{2q+2} \vert \Theta_r \vert}{\vert \Theta_2 \vert} \right)^2 \Delta(F_n).
\end{equation*}
Then, in virtue of Lemma \ref{lem:appendix} this reads 
\begin{equation}\label{eq:d-estimate}
\sum_{k=1}^{L_n} d(\alpha_{\text{max}} (n,k), \bm{ \alpha_\infty} )^2
\le M \times  \left( \frac{ 2 \sum_{r=3}^{2q+2} \vert \Theta_r \vert}{\vert
  \Theta_2 \vert}  \right)^2 \Delta(F_n).
\end{equation} 
On the other hand, for any $1 \le k \le L_n$ there exists some
$\alpha_{\infty}(n,k) \in \{ \alpha_{\infty,1}, \cdots,
\alpha_{\infty,q} \}$ realizing the minimum in definition of
$d(\alpha_{\text{max}} (n,k), \bm{ \alpha_\infty})$. Here, one has to
note that the coefficients $\alpha_\infty(n,k)$ in general can be
repeated. So, we can rewrite $(\ref{eq:d-estimate})$ as 
\begin{equation}\label{eq:after-d-estimate}
\sum_{k=1}^{L_n} \left \vert \alpha_{\text{max}} (n,k) - \alpha_{\infty}(n,k) \right \vert^2 \le M \times  \left( \frac{ 2 \sum_{r=3}^{2q+2} \vert \Theta_r \vert}{\vert
  \Theta_2 \vert}  \right)^2 \Delta(F_n),
\end{equation}
which gives us part of the control we seek. It still remains to show
that the remainder is well-behaved. To this end we will show that $\forall n \ge
N_0$ there exists $l_n \in \left\{ 1, \ldots, L+1 \right\}$ such that 
\begin{equation}\label{eq:after-d-estimate2b}
\sum_{k=1}^{\ell_n} \left \vert \alpha_{\text{max}} (n,k) -
  \alpha_{\infty}(n,k) \right \vert^2 \le M \times  \left( \frac{ 2
    \sum_{r=3}^{2q+2} \vert \Theta_r \vert}{\vert 
  \Theta_2 \vert}  \right)^2 \Delta(F_n) 
\end{equation}
and 
\begin{equation}\label{eq:after-d-estimate2a}
  \sum_{k=\ell_n+1}^{\infty} \alpha^2_{\text{max}} (n,k)  \le C \sqrt{\Delta(F_n)}.
\end{equation}
First,  taking into account the
estimate $(\ref{eq:after-d-estimate})$, one  can infer that
\begin{align}
   \left\vert (1- \sum_{k=1}^{L_n} \alpha^2_{\text{max}} (n,k) ) \right. &\left.- (1 -
   \sum_{k=1}^{L_n} \alpha^2_{\infty}(n,k)) \right\vert \nonumber\\
 & = \left\vert \sum_{k=1}^{L_n} \left(  \alpha^2_{\infty}(n,k) - \alpha^2_{\text{max}} (n,k) \right) \right\vert\nonumber \\
& \le 2 \sum_{k=1}^{L_n} \vert \alpha_{\text{max}} (n,k) - \alpha_{\infty}(n,k) \vert\nonumber \\
&\le \sqrt{L_n} \times \sqrt{ M \times \left(  \frac{ 2
  \sum_{r=3}^{2q+2} \vert \Theta_r \vert}{\vert \Theta_2 \vert}
  \right)^2 \times \Delta(F_n)}\nonumber\\ 
& \le  \sqrt{L} \times \sqrt{ M \times \left(  \frac{ 2
  \sum_{r=3}^{2q+2} \vert \Theta_r \vert}{\vert \Theta_2 \vert}
  \right)^2 \times \Delta(F_n)}.  \label{eq:variance-estimate}
\end{align}
In order to conclude we now seek for an index $l_n$ such that
\begin{equation} \label{eq:sum=0}
   \sum_{k=1}^{\ell_n} \alpha^2_{\infty} (n,k) = 1.
\end{equation}
 Given $n \ge N_0$ we have  three possibilities.   
\begin{itemize}
\item If $1- \sum_{k=1}^{L_n} \alpha^2_\infty(n,k) =0$ then we can
  take $\ell_n=L_n$ and, by \eqref{eq:variance-estimate} and
  \eqref{eq:after-d-estimate}, we are done.
\item If $1- \sum_{k=1}^{L_n} \alpha^2_\infty(n,k) >0$ then  $1- \sum_{k=1}^{L+1}
\alpha^2_{\infty}(n,k) =0$ and we can
  take $\ell_n=L+1$. Indeed we necessarily have
  $1- \sum_{k=1}^{L_n} \alpha^2_\infty(n,k) > \varrho$ by definition
  of $\varrho$. Using the
  second inequality in $(\ref{eq:Delta_n-estimate})$ as well
  as 
 the estimate given in
  $(\ref{eq:variance-estimate})$ one can infer that
\begin{equation}\label{eq:1}
1 - \sum_{k=1}^{L_n} \alpha^2_{\text{max}} (n,k) \ge \frac{\varrho}{2}.
\end{equation}
Now, using estimate $(\ref{eq:general-lower-estimate})$ with $p=L_n +1$ together with the first estimate in $(\ref{eq:Delta_n-estimate})$ we obtain that 
\begin{equation}\label{eq:last-alpha-estimate}
\vert \alpha_{\text{max}} (n,L_n +1) \vert \ge \frac{\vert \Theta_2 \vert}{2 \sum_{r=3}^{2q+2} \vert \Theta_r \vert}.
\end{equation}
If $L_n$ were strictly less than $L$, then it would contradict the
fact that $L_n$ is the maximal element of the set $\mathscr{A}_n$, and
therefore $L_n=L$. Now, following exactly the same lines as in the
beginning of this step and using $(\ref{eq:last-alpha-estimate})$ one
can infer that
\begin{enumerate}
\item[(i)]
$\sum_{k=1}^{L+1} \left \vert \alpha_{\text{max}} (n,k) -
  \alpha_{\infty}(n,k) \right\vert^2 \le M \times \left( \frac{ 2
    \sum_{r=3}^{2q+2} \vert \Theta_r \vert}{\vert \Theta_2 \vert}
\right)^2\times \Delta(F_n).$ 
\item[(ii)]$ \left\vert (1- \sum_{k=1}^{L+1} \alpha^2_{\text{max}} (n,k) ) - (1 - \sum_{k=1}^{L+1} \alpha^2_{\infty}(n,k)) \right\vert < \frac{\varrho}{2}.$
\end{enumerate}
Now, we are left to show that $1- \sum_{k=1}^{L+1}
\alpha^2_{\infty}(n,k) =0$. First, note that according to definition
of $L$, if $1- \sum_{k=1}^{L+1} \alpha^2_{\infty}(n,k) \neq 0$, then
we have to have that $1- \sum_{k=1}^{L+1} \alpha^2_{\infty}(n,k)
<0$. Now, again using definition of $\varkappa$ and $\varrho$, this
implies that $1- \sum_{k=1}^{L+1} \alpha^2_{\infty}(n,k) \le -
\varrho$. Now, taking into account $(ii)$ and the fact that $1-
\sum_{k=1}^{L+1} \alpha^2_{\text{max}} (n,k) \ge 0$, we arrive to $$
-\frac{\varrho}{2}\le1 - \sum_{k=1}^{L+1} \alpha^2_{\infty}(n,k) \le -
\varrho.$$ That is obviously a contradiction and therefore $1-
\sum_{k=1}^{L+1} \alpha^2_{\infty}(n,k) =0$. 
Finally, employing the same estimates as in
$(\ref{eq:variance-estimate})$  we get
\begin{align*}\label{intermediary-tail} 
\sum_{k \ge \ell_n +1} \alpha^2_{\text{max}}(n,k) &= \Big \vert 1 - \sum_{k \le \ell_n} \alpha^2_{\text{max}}(n,k) \Big \vert \\
&= \Big \vert \big(1 - \sum_{k \le \ell_n} \alpha^2_{\text{max}}(n,k)
\big) - \big( 1 - \sum_{k \le \ell_n} \alpha^2_{\infty}(n,k) \big)
\Big\vert \\ 
& \le C \Big( \sum_{k \le \ell_n} \vert \alpha_{\text{max}}(n,k)  -  \alpha_{\infty}(n,k) \vert^2 \Big)^{\frac{1}{2}}\\
& \le C \, \sqrt{    \Delta(F_n)}.
\end{align*}
\item The case $1-
\sum_{k=1}^{L_n} \alpha^2_\infty(n,k) <0$ can be also discussed in the
same way, this time leading to some $\ell_n < L$. 
\end{itemize}

The square root in \eqref{eq:after-d-estimate2a} is clearly not sharp
and we need to remove it.  To do this, observe that an obvious
consequence of \eqref{eq:after-d-estimate2a} is
$$\alpha^2_{\text{max}}(n,k)\le C \sqrt{\Delta(F_n)} \qquad \forall \,
k \, \ge \ell_n+1 .$$
Thus, if $\Delta(F_n)$ is small enough in the sense that
 $C \sqrt{\Delta(F_n)} \le U_\infty < \min_{1 \le i \le q+1}
 \alpha_{\infty,i}$, since  $\alpha_{\infty,i} \neq
0$  by assumption, it follows trivially that 
%
we may find a universal positive constant $C$
such that for any $n \in \N$ and for any $k\ge \ell_n+1$ it holds that
$$\prod_{i=1}^q(\alpha_{\text{max}}(n,k)-\alpha_{\infty,i})^2>C.$$
Hence
\begin{align*}
  \Delta(F_n) \ge \Delta_{\ell_n+1}(F_n) \ge C \sum_{k=\ell_n+1}^\infty \alpha^2_{\text{max}}(n,k),
\end{align*}
leading to the final estimate
\begin{equation}\label{eq:variance-tail-estimate} 
 \sum_{k=\ell_n+1}^\infty \alpha^2_{\text{max}}(n,k) \le \frac{1}{C} \, \Delta(F_n).
\end{equation}

\

\noindent \textbf{Step 4:} (An algebraic argument) We have showed that
$\forall n \ge N_0$ there exists
$l_n \in \left\{ 1, \ldots, L+1 \right\}$ such that
\begin{equation}\label{eq:after-d-estimate2b}
\sum_{k=1}^{\ell_n}  \alpha^2_{\infty}(n, k)=1 \mbox{ and }\sum_{k=1}^{\ell_n} \left \vert \alpha_{\text{max}} (n,k) -
  \alpha_{\infty}(n,k) \right \vert^2 +\sum_{k=\ell_n+1}^{\infty}
\alpha^2_{\text{max}} (n,k) \le C \Delta(F_n). 
\end{equation}
For every $n$, let
$\nu(n,k), k=1, \ldots, q$ stand for the multiplicity of the coefficient
$\alpha_{\infty,k}$ realizing the minimum in the definition of the
$d$-distance (the numbers $\nu(n,k)$ can, a priori,  be equal or take value
zero). 
Clearly $\sum_{k=1}^{q} \nu(n,k) \alpha^2_{\infty,k}=1$.  In
order to reap the conclusion in 2-Wasserstein distance, we are only
left to show that, in fact, we have that $\nu(n,k)=1$ for all
$1\le k \le q$ and $\ell_n =q$.  

\begin{proof}[Proof of Theorem~\ref{thm:main-theorem2}]
  In this case
  $\dim_{\mathbb{Q}} \text{span} \left\{
    \alpha^2_{\infty,1},\cdots,\alpha^2_{\infty,q}\right \}=q$.
  Then, condition $\sum_{k=1}^{q} \nu(n,k) \alpha^2_{\infty,k}=1$
  necessarily implies $\nu(n,k)=1$ for all $k=1, \ldots, q$ and thus
  $\ell_n=q$ (recall Remark \ref{rem:span1}).
\end{proof}

\begin{proof}[Proof of Theorem \ref{thm:main-theorem1}]
 For any $r=2,\cdots,q+1$, one can
write
\begin{equation*}
\begin{split}
\kappa_r(F_n) 
& = \kappa_r(W_1) \sum_{k=1}^{\ell_n} \alpha^r_{\text{max}}(n,k) + \kappa_r(W_1) \sum_{k=\ell_n+1}^{\infty} \alpha^r_{\text{max}}(n,k).
\end{split}
\end{equation*}
Therefore, according to $(\ref{eq:variance-tail-estimate})$, we obtain
\begin{equation}\label{eq:step4-1}
\Big\vert \kappa_r(F_n) - \kappa_r(W_1) \sum_{k=1}^{\ell_n} \alpha^r_{\text{max}}(n,k) \Big\vert  \le C  \, \Delta(F_n).
\end{equation}\label{eq:step4-2}
Moreover, from the Cauchy-Schwarz inequality and \eqref{eq:after-d-estimate2b}, we have
\begin{equation}\label{eq:step4-3}
\Big \vert  \sum_{k=1}^{\ell_n} \alpha^r_{\text{max}}(n,k) - \sum_{k=1}^{q}\nu(n,k) \alpha^r_{\infty,k} \Big \vert \le C  \,  \sqrt{\Delta(F_n)}
\end{equation}
for any $r=2,\cdots,q+1$. 
Combining estimates $(\ref{eq:step4-1})$ and $(\ref{eq:step4-3})$ together, we arrive to
\begin{equation}\label{eq:step4-4}
\Big\vert \kappa_r(F_n) - \kappa_r(W_1)  \sum_{k=1}^{q}\nu(n,k) \alpha^r_{\infty,k} \Big \vert \le  C  \, \sqrt{\Delta(F_n)}.
\end{equation}
Now, we introduce the following $q \times q$ so-called Vandermonde
matrix which is invertible because we assumed that the coefficients
$\alpha_{\infty,k}$ are pairwise distinct
\[
\mathbb{M}=
  \begin{bmatrix}
    \alpha^2_{\infty,1} & \alpha^2_{\infty,2} & \cdots & \alpha^2_{\infty,q} \\
   \cdots & \cdots & \\
  \alpha^{q+1}_{\infty,1} & \alpha^{q+1}_{\infty,2} &\cdots  &\alpha^{q+1}_{\infty,q}
  \end{bmatrix}.
\]
Set ${\bf V}_n:= (\nu(n,1),\cdots,\nu(n,q))^t$ ($t$ stands for
transposition) and consider the vector 
\begin{equation*}
  \Xi=\left( \frac{\kappa_2(F_\infty)}{\kappa_2(W_1)}, \cdots, \frac{\kappa_{q+1}(F_\infty)}{\kappa_{q+1}(W_1)}\right)^t.
\end{equation*}
 Note that by our assumption $\kappa_r(W_1) \neq 0$ for all 
$2 \le r \le q+1$. Now, inequality $(\ref{eq:step4-4})$ entails that
\begin{equation}\label{determineV-1}
\Vert \mathbb{M} {\bf V}_n -\Xi \Vert_{\infty} \le C\left( \sqrt{\Delta(F_n)}+\sum_{r=2}^{q+1}\vert \kappa_r(F_\infty)-\kappa_r(F_n)\vert\right),
\end{equation}
 which, by inversion, implies that
\begin{equation}\label{eq:determineV-2}
\Vert {\bf V}_n - \mathbb{M}^{-1}\Xi \Vert_{\infty} \le C\left( \sqrt{\Delta(F_n)}+\sum_{r=2}^{q+1}\vert \kappa_r(F_\infty)-\kappa_r(F_n)\vert\right).
\end{equation}

If the right hand side of the estimate $(\ref{eq:determineV-2})$ is strictly less than $1$, say $\frac{1}{3}$, then because ${\bf V}_n$ is a vector of integer numbers, we necessarily have
$${\bf V}_n = \lfloor \mathbb{M}^{-1}\Xi \rfloor$$
where $\lfloor \cdot \rfloor$ denotes the standard integer part. 
In order to conclude the proof, it remains to show that $ \lfloor \mathbb{M}^{-1}\Xi \rfloor$ is the vector 
$(1,\cdots,1)^t$ and $\ell_n=q$. Note that $ \lfloor \mathbb{M}^{-1}\Xi \rfloor$ does not depend on the approximating sequence $F_n$ but only on the target random variable $F_\infty$. 
So we might place ourselves in the obvious situation where the approximating sequence $F_n$ is nothing else than the target itself. In this case ${\bf V}_n=(1,\cdots,1)^t$ and necessarily

$$ \lfloor \mathbb{M}^{-1}\Xi \rfloor=(1,\cdots,1)^t.$$
\end{proof}

\begin{rem}\label{rem:true-rate}{ \rm Here, we would like to strongly
    highlight that in order to show that all the multiplicity numbers
    $\nu(n,k)=1$ for all $k=1,\cdots,q$, one needs that the all
    distances $\vert \kappa_r(F_n) - \kappa_r(F_\infty)\vert$ are very
    small (in the above sense) for all $r=2,\cdots,q+1$. However,
    after doing that taking into account the estimate
    $\eqref{eq:step4-4}$, one can immediately observe that the true
    convergence rate is given by $\sqrt{\Delta(F_n)}$.  }
\end{rem}

 \section*{Acknowledgments}
BA's research is supported by a Welcome Grant from the
 Universit\'e de Li\`ege.  YS gratefully acknowledges support from the
 IAP Research Network P7/06 of the Belgian State (Belgian Science Policy).

\addcontentsline{toc}{section}{References}%

\

(B. Arras and Y. Swan) \textsc{Mathematics department,  Universit\'e
    de Li\`ege, Li\`ege, Belgium}

(E. Azmoodeh) \textsc{Department of Mathematics and Statistics, University of
  Vaasa, Finland}

(G. Poly) \textsc{Institut de Recherche Math\'ematiques de Rennes,
  Universit\'e de Rennes 1, Rennes, France}

\

\emph{E-mail address}, B. Arras {\tt barras@ulg.ac.be }

\emph{E-mail address}, E. Azmoodeh {\tt ehsan.azmoodeh@uva.fi }

\emph{E-mail address}, G. Poly {\tt guillaume.poly@univ-rennes1.fr }

\emph{E-mail address}, Y. Swan  {\tt yswan@ulg.ac.be }

\end{document}